\newtheorem{theorem}{Theorem}
\newtheorem{lemma}[theorem]{Lemma}
\newtheorem{proposition}[theorem]{Proposition}
\newtheorem*{SIP}{\emph{Stable Interpolation Property}}
\newtheorem*{kf}{Approximation Property}
\newtheorem{claim}[theorem]{Claim}
\newcommand{\supp}[1]{{\text{\rm supp}}({#1})}
\newcommand{\ints}{\mathbb{Z}}
\newcommand{\Z}{\mathbb{Z}}
\newcommand{\nats}{\mathbb{N}}
\newcommand{\reals}{\mathbb{R}}
\newcommand{\comps}{\mathbb{C}}
\newcommand{\dif}{\, \mathrm{d}}
\renewcommand{\d}{\mathrm{dist}}
\renewcommand{\k}{k_0}
\newcommand{\calf}{\mathcal F}
\newcommand{\calg}{\mathcal G}
\newcommand{\bfX}{\mathbf X}
\newcommand{\mult}[1]{\mathcal{M}_{#1}}
\newcommand{\pw}{\mathrm{PW}}
\newcommand{\cl}[1]{\mathcal{#1}}
\newcommand{\vp}{\varepsilon}
\newcommand{\sig}{\boldsymbol{\sigma}}
\newcommand{\1}{{\rm 1\hspace*{-0.4ex}%
\rule{0.1ex}{1.52ex}\hspace*{0.2ex}}}
\numberwithin{equation}{section}
\numberwithin{theorem}{section}
\title{Cardinal Interpolation with Gaussian Kernels \thanks{\emph{2000 Subject Classification:}41A15,41A25,41A63, 42B15}
\thanks{\emph{Key words:} interpolation, Gaussians, shift-invariant spaces, radial basis functions, multipliers}}
\author{T. Hangelbroek\thanks{Department of Mathematics, 
Texas A\&M University, College Station, TX 77843, USA. Research supported
    by NSF Postdoctoral Research Fellowship.},
%    author two information
W. Madych\thanks{ Department of Mathematics, University of Connecticut Storrs, CT 06269, USA.},
%    author two information
F. Narcowich\thanks{Department of Mathematics, Texas A\&M University, College Station, TX 77843, USA. Research supported by grant DMS-0807033 from the National Science Foundation.},
%    author four information
J.~D.~Ward\thanks{Department of Mathematics, Texas A\&M University, College Station, TX 77843, USA.
Research supported by grant DMS-0807033 from the National Science Foundation.}
}
\begin{document}

%    author one information

%    For articles to be published after 1 January 2010, you may use
%    the following version:
%\subjclass[2010]{Primary }
\maketitle
\begin{abstract}
  In this paper, interpolation by scaled multi-integer translates of
  Gaussian kernels is studied. The main result establishes $L_p$
  Sobolev error estimates and shows that the error is controlled by
  the $L_p$ multiplier norm of a Fourier multiplier closely related to
  the cardinal interpolant, and comparable to the Hilbert transform.
  Consequently, its multiplier norm is bounded independent of the grid
  spacing when $1<p<\infty$, and involves a logarithmic term when
  $p=1$ or $\infty$.
\end{abstract}
\section{Introduction}

We consider interpolation by means of linear combinations of
translates of a fixed Gaussian where the data consists of samples
$f(hk)$, $k$ in $\Z^n$, of a continuous functions $f$ and derive $L_p$
error estimates in terms of $h$ and appropriate smoothness properties
of $f$. Namely, our interpolants are of the form
$$
s_h(x) = \sum_{\k\in \ints^n} a_k e^{-|x- hk|^2}
$$
and, roughly speaking, our estimates are of the form
$\|f-s_h\|_{L_p} \leq Ch^k \|f\|_{W_p^k}$, valid for 
sufficiently large $k$. 

In a series of papers \cite{S,RS1,RS2,RS3}, Riemenschneider and
Sivakumar have developed a comprehensive theory of cardinal
interpolation by Gaussians, treating issues of existence/uniqueness of
interpolants, decay of fundamental functions, bounds on Lebesgue
constants, and $L_p$ stability for data in $\ell_p$.  This is an
outgrowth of a general theory of cardinal interpolation that started
for univariate splines with Schoenberg \cite{Scho}, was extended to
several variables for {\em box splines} by de Boor, H\"{o}llig, and
Riemenschneider \cite{BoHR}, and for radial basis functions by Madych
and Nelson \cite{MN}, and Buhmann\cite{Buh}.

One important topic not addressed by Riemenschneider and Sivakumar is
the rate of convergence of the interpolant to a given smooth target
function.  In general, global approximation of smooth, non-analytic
functions with Gaussians and other $C^{\infty}$ positive definite
functions poses a considerable challenge.  In these cases, rates of
decay are often known to be {\em spectral} \footnote{decaying
  exponentially fast} \cite{MadNe}, but typically hold only for target
functions that are infinitely smooth.  More mainstream, linear error
estimates for functions of finite smoothness (from Sobolev spaces, for
example), where the $L_p$ norm of the error decays at a rate governed
by the $L_p$ smoothness of the target function\footnote{This stands in
  contrast to {\em nonlinear} approximation, where the $L_p$ norm of
  the error decays at a rate dependent on a different smoothness norm,
  but generally Gaussians at different scales must be employed,
  cf. \cite{HR, KP}} have been more elusive.  In this regard, the
approximation power of the underlying spaces has only been thoroughly
understood in the shift invariant setting, but that of the
interpolants has not as yet been studied. In the case of Gaussians,
the $L_2$ error estimates fall under the shift invariant theory
developed in \cite{BDR1,BDR2} and
%described specifically for the Gaussian in \cite[Corollary 2.35]{amos}.
generalized to $L_p$ by Johnson \cite[Section 4]{Johnson}.

%%%%%%%%%%%%%%%%%%%%%%%%%%%%%%%%%%%%
\subsection{Overview} 
In this article, we demonstrate convergence rates for cardinal
Gaussian interpolation for target functions having finite smoothness.
The basic strategy we adopt has been developed in \cite{NW} and is
based around a $K$-functional argument, usually brought about by
band limiting the target function in a precise way.  The techniques we
use for estimating the error involve showing that interpolating by
band-limited functions delivers precise approximation rates, and that
such band-limited interpolants form a very useful class of target
functions on which cardinal Gaussian interpolation is very stable.  In
this case, the main challenge is to demonstrate this extra stability;
this is accomplished by carefully controlling the multiplier norm of
the Lagrange function.

In Section \ref{main_result}, we describe in detail our main result
and the strategy that we will use to obtain it. At the end of that
section, we discuss a way to generalize the results to other,
analogous situations. The approximation results for interpolation by
band-limited functions is the focus of Section \ref{bandlimited}.  The
main tools for cardinal Gaussian interpolation and the key multiplier
estimate are given in Section \ref{CIG}.  The extra stability results
are demonstrated in Section \ref{stable_interpolation}.

%%%%%%%%%%%%%%%%%%%%%%%%%%%%%%%%%%%%%%
\subsection{Notation and Background}
The symbol $C$, often with a subscript, will always represent a
constant. The subscript is used to indicate dependence on various
parameters.  The value of $C$ may change, sometimes within the same
line.

Let $\mathcal{S}$ denote the space of Schwartz functions on
$\reals^n$.  The $n$-dimensional Fourier transform is given
by %the formula
$\widehat{f}(\xi) = \int_{\reals^n} f(x) e^{-i\langle\xi,x\rangle}
\dif x$, and its inverse is $ f^{\vee}(x) = (2\pi)^{-n}
\int_{\reals^n} f(\xi) e^{i\langle x,\xi\rangle} \dif \xi.$ An
important property of the Gaussian functions
%%%%%%%%%%%%%%%%%%%%
%:Eqn Gaussian
%%%%%%%%%%%%%%%%%%%%%
\begin{equation}\label{Gaussian}
\mathsf{g}: x \mapsto \exp\bigl[-|x|^2\bigr],
\end{equation}
%%%%%%%%%%%%%%%%%%%
is that they satisfy
$\widehat{\mathsf{g}}   = \pi^{n/2} \mathsf{g}(\cdot/2).$

Given a {\em multiplier} $m:\reals^n \to \comps$, which maps $f$ 
to $\left( \widehat{f}  \cdot m \right)^{\vee}$
the $L_p\to L_p$ operator norm is denoted,
$$
\|m\|_{\mult{p}} :=\sup_{\|f\|_p=1} \left\| \left(\widehat{f}\cdot
    m\right)^{\vee}\right\|_p
$$ 

Let $\Omega \subseteq \reals^n$ be a domain that satisfies a uniform
cone condition. The Sobolev space $W_p^k(\Omega)$ is endowed its usual
seminorm $|\cdot|_{W_p^k(\Omega)}$ and norm $\| \cdot
\|_{W_p^k(\Omega)}$, defined by
$$
|f|_{W_p^k(\Omega)}:= \sup_{|\alpha|=k} \|D^{\alpha} f\|_{L_p(\Omega)},
\quad \text{and} \quad 
\|f\|_{W_p^k(\Omega)}:= \|f\|_{L_p(\Omega)} + |f|_{W_p^k(\Omega)}.
$$
When just the seminorm is used, the resulting space is a
\emph{Beppo-Levi} space. We will denote it by $\dot
W_p^k(\Omega)$. Finally, most of the time we will be dealing with
$\Omega=\reals^n$. When that is the case, we will just use $L_p$,
$W_p^k$, or $\dot W_p^k$.

We denote by $B(x,r)$ the ball in $\reals^n$ having center $x$ and
radius $r$.  The space of entire functions of exponential type, viewed
as tempered distributions whose Fourier transform is supported in
$B(0,R)$, is given by
$$
\pw(R) := \{ f\in \mathcal{S}'\mid \supp{\widehat{f}}
\subset B(0,R)\}.
$$
This is the \emph{Paley-Wiener space} of band-limited entire
functions. In addition, we let $\pw_p^k(R) := W_p^k\cap \pw(R)$, the
band-limited entire functions in $W_p^k$.

\section{The Main Result}\label{main_result}
We consider interpolation of a continuous function $f:C(\reals^n)\to
\comps$ at gridded centers $h \ints^n$ using elements of the linear
span of shifts of a fixed Gaussian kernel $g(x) = \exp(-|x|^2)$, the
span being closed in the topology of uniform convergence on compact
sets. In other words, we consider interpolation by functions of the
form $s_{f,h}(x)=\sum_{\zeta \in h\ints^n} a_{\zeta} \mathsf{g}(x -
\zeta).$

The existence and uniqueness of the interpolant 
is a consequence of the existence/uniqueness 
of the {\em Lagrange} function $\chi_{h}$. Indeed, 
$$
I_h f(x) := \sum_{\xi\in h \ints^n} f(\xi) \chi_{h}(x-\xi).
$$
The Lagrange function is the function in the (extended) span of shifts
of $g$ equaling $1$ at the origin and $0$ at all other dilated
multi-integers.  These have been studied in \cite{RS1,RS2}, as the
{\em cardinal} interpolant: $L_{\lambda}^{[n]} (y)= \sum_{j\in
  \ints^n} c_j \exp(- \lambda |y- j|^2)$. The relation between the two
is
\begin{equation}\label{Card-Lagrange}
  \chi_{h}(x) = \sum_{\xi\in h\ints^n} b_{\xi} \mathsf{g}(x-\xi) = 
  L_{h^2}^{[n]}\left(\frac{x}{h}\right).
\end{equation}
The problem that we have set forth for ourselves is to obtain a good
understanding of how well Gaussian interpolants approximate functions
in various Sobolev spaces. Our main result is the following:

\begin{theorem}[\bfseries{Main Result}]\label{main_thm}
  Let $1< p< \infty$ and $k> n/p$. There exists a constant $C_p$ so
  that for $f\in W_p^k(\reals^n)$, the Gaussian interpolant $I_h f =
  \sum_{\xi \in h\ints^n} f(\xi) \chi_h(\cdot-\xi)$ satisfies
$$\|f -I_h f\|_p \le C_p h^k \|f\|_{W_p^k}.$$
For $p=1$ and $k\ge n$ or $p=\infty$ and $k>0$, there is a constant
$C$ so that for $f\in W_p^k(\reals^n)$
$$\|f -I_h f\|_p \le C  (1+|\log h|)^n h^k \|f\|_{W_p^k}.$$
%For $p=\infty$, there is a constant $C$ so that for $f\in C^k(\reals^n)$
%$$\|f -I_h f\|_{\infty} \le C (1+\log h)^n h^k \|f\|_{C^k}.$$
\end{theorem}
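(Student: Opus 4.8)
The plan is to perform a band-limiting ($K$-functional) reduction and then to show that, on the resulting band-limited functions, cardinal Gaussian interpolation is simultaneously \emph{consistent} (it reproduces the target to the full Sobolev order) and \emph{stable} (it is bounded on $L_p$), with all constants ultimately governed by the $L_p$ multiplier norm of the symbol of the Lagrange function $\chi_h$. Throughout, $k>n/p$—together with the borderline embedding $W_1^n(\Rd)\hookrightarrow C(\Rd)$ in the case $p=1$, $k=n$—ensures $W_p^k(\Rd)\hookrightarrow C(\Rd)$, so that the samples $f(\xi)$, $\xi\in h\Zd$, and hence $I_h f$, are well defined. I would fix the Nyquist scale $\sigma=\pi/h$ and use the \emph{Interpolatory Bandlimiting} construction of Section \ref{bandlimited} to produce $g\in\pw(\sigma)$ that \emph{interpolates $f$ on the grid}, $g|_{h\Zd}=f|_{h\Zd}$. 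Since $I_h$ depends only on grid values, $I_h g=I_h f$, and therefore
\begin{equation*}
f-I_h f \;=\; (f-g)\;+\;(g-I_h g),
\end{equation*}
which trades the original target for a band-limited one and separates the two phenomena I must control.

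For the first term I would invoke the \emph{Approximation Property} of Section \ref{bandlimited}: the band-limited interpolant reproduces $f$ to the full order, $\norm{f-g}_p\le C\,h^k\norm{f}_{W_p^k}$, while a Bernstein inequality gives $\norm{g}_{W_p^k}\le C\norm{f}_{W_p^k}$. For the second term I would pass to the Fourier side. Using Poisson summation and the interpolation conditions defining $\chi_h$, the Gaussian interpolant of a band-limited input is realized through the Lagrange symbol:
\begin{equation*}
\widehat{I_h g}(\xi)=m_h(\xi)\sum_{j\in\Zd}\widehat g\bigl(\xi+\tfrac{2\pi}{h}j\bigr),\qquad m_h(\xi)=\frac{\widehat{\mathsf g}(\xi)}{\sum_{l\in\Zd}\widehat{\mathsf g}\bigl(\xi+\tfrac{2\pi}{h}l\bigr)},
\end{equation*}
with $\widehat{\mathsf g}(\xi)=\pi^{n/2}e^{-|\xi|^2/4}$. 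Consequently $\widehat{g-I_hg}=(1-m_h)\widehat g-m_h\sum_{j\neq0}\widehat g(\cdot+\tfrac{2\pi}{h}j)$, and the \emph{Stable Interpolation Property} of Section \ref{stable_interpolation} bounds $\norm{g-I_hg}_p$ by $\norm{m_h}_{\mult{p}}$ times the spectral content of $g$ near the boundary of the period cell; the smoothness of $g$ renders the latter of order $h^k\norm{f}_{W_p^k}$, so $\norm{g-I_hg}_p\le C\,\norm{m_h}_{\mult{p}}\,h^k\norm{f}_{W_p^k}$.

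The hard part is the multiplier estimate of Section \ref{CIG}: bounding $\norm{m_h}_{\mult{p}}$ uniformly in $h$. After the rescaling $\xi\mapsto\xi/h$, the super-exponential decay of $\widehat{\mathsf g}$ forces $m_h$ to converge to the characteristic function $\1$ of the fundamental cube $[-\pi,\pi]^n$ (the Voronoi cell of $2\pi\Zd$); thus $m_h$ is comparable to the cube multiplier, which factors as a tensor product of one-dimensional multipliers, each comparable to the Hilbert transform. For $1<p<\infty$ the Hilbert transform is bounded on $L_p$, so $\norm{m_h}_{\mult{p}}\le C_p$ independently of $h$, and combining the two terms gives $\norm{f-I_h f}_p\le C_p h^k\norm{f}_{W_p^k}$. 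At the endpoints $p=1,\infty$ the Hilbert transform is unbounded; truncating the symbol at the scale $1/h$ produces a Dirichlet-type convolution kernel of $L_1$ norm $\asymp 1+|\log h|$ in each coordinate direction, and the $n$-fold product structure yields the factor $(1+|\log h|)^n$. Establishing this comparison rigorously—showing that the smoothed symbol $m_h$, and its difference from $\1$, inherit the (iterated) Hilbert-transform bounds uniformly in the grid spacing—is the crux of the whole argument.
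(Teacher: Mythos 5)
Your overall architecture matches the paper's: split $f - I_h f = (f-g) + (g - I_h g)$ using a band-limited $g$ agreeing with $f$ on $h\Z^n$ (so $I_h g = I_h f$), handle $f-g$ by the Approximation Property, and reduce everything to the $\mathcal{M}_p$ norm of $m_h$, with a Hilbert-transform comparison for $1<p<\infty$ and an $L_1$-kernel bound giving the logarithmic factor at the endpoints. Two corrections before the main point. First, the band cannot be the exact Nyquist scale $\pi/h$: the construction of Section \ref{bandlimited} requires a smooth cutoff compatible with periodization, which forces $b = \pi+\vp > \pi$; this overshoot is not cosmetic, since it is precisely what makes the aliases $\widehat g(\cdot - 2\pi j/h)$, $|j|_\infty = 1$, overlap the region where $m_h \approx 1$, which is the delicate case in Section \ref{stable_interpolation}. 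Second, the bound $|g|_{W_p^k} \le C |f|_{W_p^k}$ is not a Bernstein inequality --- Bernstein would give only the useless $|g|_{W_p^k} \lesssim h^{-k}\|g\|_p$; it is property \eqref{eq3} of Lemma \ref{BandlimitedIntLemma}, proved from the construction itself.

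The genuine gap is in your treatment of $g - I_h g$. You invoke the Stable Interpolation Property as if it bounded $\|g - I_h g\|_p$ by $\|m_h\|_{\mathcal{M}_p}$ times an $O(h^k)$ quantity. It does not: the paper's SIP is the Sobolev-stability bound $|I_h g|_{W_p^k} \le Q_p(h)\|g\|_{W_p^k}$, with no gain in $h$; and your own weaker reading of stability ($L_p$-boundedness of $I_h$ on $\pw(b/h)$) can never produce a factor $h^k$, since by itself it only yields $\|g - I_h g\|_p \le (1+C)\|g\|_p$. The paper extracts $h^k$ by a different device entirely: $g - I_h g$ vanishes on $h\Z^n$, so the Madych--Potter estimate for smooth functions with many zeros gives $\|g - I_h g\|_p \le C h^k |g - I_h g|_{W_p^k}$, and only then is Sobolev stability used, to control $|I_h g|_{W_p^k}$. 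Your alternative --- showing directly on the Fourier side that $(1 - m_h^{[n]})\widehat g$ and the alias terms $m_h^{[n]} \widehat g(\cdot - 2\pi j/h)$ are $O(h^k \|f\|_{W_p^k})$ in $L_p$ because the spectral mass of $g$ at frequencies $|\xi| \gtrsim \pi/h$ is controlled by $h^k |g|_{W_p^k}$ --- is viable and would bypass Madych--Potter, but it is an additional estimate that must be proved, and proving it requires essentially the same boundary-overlap machinery (smooth cutoffs, Taylor expansion of the monomial about $2\pi/h$, exponential bounds on far cells) as the paper's Claims \ref{tau1}--\ref{tau3}; it is not a consequence of any stability statement. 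As written, the proposal asserts the crucial $h^k$ bound rather than proving it, and attributes it to a lemma that does not contain it.
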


The strategy for proving the main theorem involves two steps, which we
will discuss before we give the proof. The first is showing that
interpolation of functions in $\dot W_p^k$ by band-limited functions
on $h\Z^n$ is possible, and that the band-limited interpolants
approximate functions in $\dot W_p^k$ very well. This is the content
of the Approximation Property described in Lemma
\ref{BandlimitedIntLemma} below. We will prove this in Section
\ref{bandlimited}.

\begin{lemma}[\bfseries{\emph{Approximation
    Property}}] \label{BandlimitedIntLemma} Let $b = \pi+\vp$ with
  $0<\vp<\pi.$ If $f\in \dot W_p^k$ for $k>n/p$, $1\le p\le
  \infty$ then given $h>0$ there is a function $\tilde f$ satisfying
\begin{equation}\label{eq1}
\tilde f\in \pw(b/h)
\end{equation}
such that
\begin{align}
\label{eq2}
 &\tilde f|_{h\ints^n} = f|_{h\ints^n}\\
\label{eq4}
 &\|f-\tilde f\|_{L_p} \le Ch^k |f|_{W_p^k}\\
\label{eq3}
&|\tilde f|_{W_p^k} \le C|f|_{W_p^k}
\end{align}
\end{lemma}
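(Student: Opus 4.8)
The plan is to rescale to $h=1$ and then build $\tilde f$ as a smooth band-limited approximation of $f$ plus a band-limited correction that restores the grid values. First I would set $g(x)=f(hx)$, so that $g|_{\mathbb{Z}^n}$ corresponds to $f|_{h\mathbb{Z}^n}$, and record the dilation identities $|g|_{W_p^k}=h^{k-n/p}|f|_{W_p^k}$, $\|f-\tilde f\|_{L_p}=h^{n/p}\|g-\tilde g\|_{L_p}$, and $|\tilde f|_{W_p^k}=h^{n/p-k}|\tilde g|_{W_p^k}$, where $\tilde f(x):=\tilde g(x/h)$. Since $\tilde g\in\pw(b)$ forces $\tilde f\in\pw(b/h)$, these identities turn an $h$-free estimate at scale $1$ into precisely the factors $h^k$ and $h^0$ demanded by \eqref{eq4} and \eqref{eq3}. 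Thus it suffices to treat $h=1$, and I write $f$ for $g$ below.

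For the approximation piece, fix a radial $\widehat\sigma\in C^\infty$ with $\widehat\sigma\equiv 1$ on a neighborhood of the origin and $\supp{\widehat\sigma}\subset B(0,b)$, and put $Pf:=\sigma * f$, so $Pf\in\pw(b)$. Because $\widehat\sigma\equiv1$ near $0$, every moment $\int y^\beta\sigma(y)\dif y$ with $|\beta|\ge 1$ vanishes, so Taylor's theorem with integral remainder gives $f-Pf$ as an average of $k$-th order terms in $D^\alpha f$; Minkowski's integral inequality then yields $\|f-Pf\|_{L_p}\le C\bigl(\int|\sigma(y)|\,|y|^k\dif y\bigr)\,|f|_{W_p^k}\le C|f|_{W_p^k}$, uniformly in $p\in[1,\infty]$. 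The same convolution structure gives $|Pf|_{W_p^k}\le\|\sigma\|_{L_1}\,|f|_{W_p^k}$, since $D^\alpha Pf=\sigma * D^\alpha f$. The one thing $Pf$ lacks is the interpolation property \eqref{eq2}.

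To repair this I would set $\tilde f:=Pf+q$ with $q:=\sum_{j\in\mathbb{Z}^n} r_j\,\psi(\cdot-j)$, where $r_j:=f(j)-Pf(j)$ and $\psi\in\pw(b)$ is a rapidly decaying band-limited cardinal function with $\psi(m)=\delta_{0m}$; the slack $\vp>0$ in $b=\pi+\vp$ is exactly what allows $\widehat\psi$ to be taken smooth, and hence $\psi$ and each $D^\alpha\psi$ to decay faster than any polynomial. Then $q\in\pw(b)$, so $\tilde f\in\pw(b)$, and $\tilde f(m)=Pf(m)+r_m=f(m)$ gives \eqref{eq2}. To control $q$ I would combine two standard inequalities: a sampling bound $\|r\|_{\ell_p}\le C\|f-Pf\|_{W_p^k}\le C|f|_{W_p^k}$, valid because $k>n/p$ makes point evaluation bounded on $W_p^k$ of a unit cell and the cells tile $\mathbb{R}^n$; and a synthesis bound $\|\sum_j c_j\phi(\cdot-j)\|_{L_p}\le C_\phi\|c\|_{\ell_p}$ for rapidly decaying $\phi$, with $C_\phi$ finite and controlled by $\sup_x\sum_j|\phi(x-j)|$, applied to $\phi=\psi$ and to each $\phi=D^\alpha\psi$. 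Together these give $\|q\|_{L_p}+|q|_{W_p^k}\le C\|r\|_{\ell_p}\le C|f|_{W_p^k}$, and adding the $Pf$ estimates closes \eqref{eq4} and \eqref{eq3}, with all constants independent of $p$.

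The main obstacle is the construction of the band-limited cardinal function $\psi$ and, more delicately, the decay of $\psi$ and its derivatives needed to make the synthesis bound hold uniformly at the endpoints $p=1$ and $p=\infty$: there one cannot fall back on Fourier-multiplier boundedness and must instead dominate $\sum_j|\psi(x-j)|$ and $\sum_j|D^\alpha\psi(x-j)|$ pointwise. Building $\widehat\psi$ as a $C^\infty$ function adapted to the dual lattice $2\pi\mathbb{Z}^n$ with the cardinal normalization, and checking that its periodized symbol stays well behaved, is the part that requires genuine care; the freedom afforded by $\vp>0$ is precisely what upgrades a merely continuous construction to a smooth, fast-decaying one. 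By contrast, the sampling inequality and the $h=1$ reduction are routine.
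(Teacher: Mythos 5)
Your construction is essentially the paper's own proof, reorganized. The paper takes $\tilde f$ to be the cardinal series $\sum_{j} f(hj)\,\varphi(\cdot/h - j)$, where $\widehat\varphi$ is a smooth cutoff normalized so that $\sum_{j\in\ints^n}\widehat\varphi(\cdot-2\pi j)\equiv 1$ --- by Poisson summation this normalization is exactly the cardinal property $\varphi|_{\ints^n}=\delta_0$ you demand of $\psi$ --- and then splits $f=f_0+f_1$ with $\widehat f_0=\widehat f\,\widehat\varphi(2h\cdot)$, proving the reproduction identity $g_0=f_0$ (the cardinal series reproduces the low-pass part exactly), so that the error reduces to the high-frequency pieces. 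Those pieces are controlled by precisely your three tools: the band-limited mollification error $\|f-f*\rho\|_p\le Ch^k|f|_{W_p^k}$, the sampling inequality $\|F|_{\ints^n}\|_{\ell_p}\le C(\|F\|_{L_p}+|F|_{W_p^k})$ for $k>n/p$, and the $\ell_p$-synthesis bound for a rapidly decaying cardinal function. Your ``mollify, then correct on the grid'' decomposition $\tilde f=Pf+\sum_j (f-Pf)(j)\,\psi(\cdot-j)$ makes the interpolation property \eqref{eq2} true by construction and dispenses with the reproduction identity; in fact, if you take $\widehat\sigma=\widehat\varphi(2\cdot)$ and $\psi=\varphi$, your $\tilde f$ coincides with the paper's, since the reproduction identity says exactly that $\sum_j Pf(j)\varphi(\cdot-j)=Pf$. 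Your residual bound $\|(f-Pf)|_{\ints^n}\|_{\ell_p}\le C|f|_{W_p^k}$ plays the role the paper assigns to $\|f_1|_{h\ints^n}\|_{\ell_p}$, and your $h=1$ rescaling bookkeeping is correct.

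The one point that deserves more than ``genuine care'' is the existence of $\psi$, and here your formulation, read literally, fails in higher dimensions --- though the paper shares the slippage. If $\pw(b)$ means Fourier support in the Euclidean ball $B(0,b)$, then a rapidly decaying cardinal $\psi\in\pw(b)$ with $\psi(m)=\delta_{0m}$ cannot exist once $b<\sqrt{n}\,\pi$: the cardinal property forces $\sum_{j}\widehat\psi(\cdot-2\pi j)\equiv1$ (Poisson summation, valid for the decay you require), yet the point $(\pi,\dots,\pi)$ lies at Euclidean distance at least $\sqrt{n}\,\pi$ from every point of $2\pi\ints^n$, so the periodization vanishes in a neighborhood of it. Since $b=\pi+\vp<2\pi$, this rules out every $n\ge4$, and small $\vp$ already rules out $n=2,3$. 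The cure is the paper's: build $\psi$ as a tensor product $\psi(x)=\prod_{i}\psi_1(x_i)$ of univariate normalized cutoffs, so that $\supp{\widehat\psi}$ is the cube $\{\xi\colon|\xi|_\infty\le\pi+\vp\}$ rather than a Euclidean ball (the paper's $\widehat\varphi$ is supported in exactly this cube, and its assertion that the resulting function lies in $\pw(b/h)$ carries the same ball-versus-cube imprecision). With cube-supported cutoffs --- equivalently, reading $\pw$ throughout with $\ell_\infty$ balls, or enlarging $b$ by a factor $\sqrt{n}$ --- your construction and all of your estimates go through.
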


The second step is to show that Gaussian interpolation is stable on
the space $\pw_p^k(b/h)$, endowed with the Sobolev norm. We will prove
this in Section \ref{stable_interpolation}.
%%%%%%%%%%%%%%%% 
\begin{SIP}
  Let $k\in\nats$, $k>n/p$.  We say that the interpolation operators
  $I_h$ satisfy the {\em stable interpolation property} on the
  family $\pw_p^k(b/h)$ if there is
  $Q_p:h\to (0,\infty)$ so that one has
$$
|I_h f|_{W_p^k} \le Q_p(h) \|f\|_{W_p^k}
\quad\text{ for all }\ 
f\in \pw_p^k(b/h).
$$
\end{SIP}

\begin{proposition}\label{gaussian_pw_error}
  Let $1\le p\le \infty$. If the interpolation operators $I_h$ satisfy
  the Stable Interpolation Property on $\pw_p^k(b/h)$, then for
  $k>n/p$,
$$
\|I_h f- f\|_p \le C (1+ Q_p(h)) h^k \|f\|_{W_p^k}\,,\ f\in W_p^k.
$$
\end{proposition}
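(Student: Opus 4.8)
The plan is to combine the two ingredients the paper has set up: the Approximation Property (Lemma~\ref{BandlimitedIntLemma}), which produces a good band-limited interpolant $\tilde f$ of $f$, and the Stable Interpolation Property, which controls the Sobolev norm of the Gaussian interpolant of any band-limited function. Let me sketch how I expect these to fit together.

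First I would exploit the fact that $I_h$ reproduces data on the grid. Given $f\in W_p^k$, apply Lemma~\ref{BandlimitedIntLemma} to obtain $\tilde f\in\pw(b/h)$ with $\tilde f|_{h\ints^n}=f|_{h\ints^n}$, together with the error bound $\|f-\tilde f\|_{L_p}\le Ch^k|f|_{W_p^k}$ and the stability bound $|\tilde f|_{W_p^k}\le C|f|_{W_p^k}$. Since $I_h$ depends only on the grid samples, and $f$ and $\tilde f$ agree on $h\ints^n$, we have $I_h f=I_h\tilde f$. This is the crucial observation: it lets me replace the interpolant of the rough target $f$ by the interpolant of the smooth, band-limited surrogate $\tilde f$, on which the Stable Interpolation Property is available.

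Next I would estimate the error by a triangle inequality split through $\tilde f$:
\begin{equation*}
\|I_h f-f\|_p \le \|I_h\tilde f-\tilde f\|_p + \|\tilde f-f\|_p.
\end{equation*}
The second term is already $\le Ch^k|f|_{W_p^k}$ by \eqref{eq4}. For the first term, I would use a Bramble--Hilbert / Markov-type argument valid for band-limited functions: since $\tilde f-I_h\tilde f$ is (up to the Gaussian tail) a band-limited object vanishing on $h\ints^n$, its $L_p$ norm should be controlled by $h^k$ times its $k$-th order Sobolev seminorm. Concretely, I expect a bound of the shape $\|I_h\tilde f-\tilde f\|_p \le Ch^k\bigl(|\tilde f|_{W_p^k}+|I_h\tilde f|_{W_p^k}\bigr)$. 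Applying \eqref{eq3} to the first seminorm and the Stable Interpolation Property $|I_h\tilde f|_{W_p^k}\le Q_p(h)\|\tilde f\|_{W_p^k}$ to the second, and absorbing lower-order norms appropriately, yields $\|I_h\tilde f-\tilde f\|_p\le Ch^k(1+Q_p(h))\|f\|_{W_p^k}$. Combining the two terms gives the claimed estimate.

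The main obstacle, and the step requiring the most care, is justifying the band-limited zero-on-the-grid inequality $\|u\|_p\le Ch^k|u|_{W_p^k}$ for a function $u=\tilde f-I_h\tilde f$ that vanishes on $h\ints^n$. The interpolant $I_h\tilde f$ is \emph{not} itself band-limited---it is a Gaussian combination---so I cannot directly invoke a Paley--Wiener sampling inequality for $u$. The honest route is a local polynomial-reproduction (Bramble--Hilbert) argument: on each grid cell of side $h$, since $u$ vanishes at the cell's vertices, its $L_p$ norm is bounded by $h^k$ times the local $W_p^k$ seminorm of $u$, and summing over cells and using $|u|_{W_p^k}\le|\tilde f|_{W_p^k}+|I_h\tilde f|_{W_p^k}$ closes the estimate. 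The delicate points are ensuring the polynomial-reproduction constant is uniform in $h$ (which follows by the standard scaling $x\mapsto x/h$) and confirming that $u$ is regular enough for the local estimate, which is guaranteed by $\tilde f\in\pw_p^k(b/h)$ and the smoothness of the Gaussian; I would state these as the technical lemmas to be verified.
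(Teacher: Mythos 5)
Your overall architecture is exactly the paper's: construct $\tilde f$ via the Approximation Property, use $I_h f = I_h\tilde f$ (both depend only on the grid data), split the error by the triangle inequality through $\tilde f$, bound $\|\tilde f - I_h\tilde f\|_p \le Ch^k\bigl(|\tilde f|_{W_p^k} + |I_h\tilde f|_{W_p^k}\bigr)$ using the fact that $\tilde f - I_h\tilde f$ vanishes on $h\ints^n$, and close with the Stable Interpolation Property and the smoothness bound on $\tilde f$. The paper obtains the zero-on-the-grid inequality $\|u\|_p\le Ch^k|u|_{W_p^k}$ by citing Madych and Potter \cite[Corollary 1]{MadPo}; you are right that this, and not any Paley--Wiener sampling inequality, is the step needing justification, since $I_h\tilde f$ is a Gaussian sum and not band-limited.

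However, your proposed self-contained justification of that inequality has a genuine flaw. Vanishing at the $2^n$ vertices of a single cell of side $h$ does not annihilate polynomials of degree less than $k$ once $k\ge 3$: for example, $P(x)=x_1(h-x_1)$ vanishes at every vertex of $[0,h]^n$ but is a nonzero quadratic with $|P|_{W_p^k}=0$, so the claimed cell estimate $\|u\|_{L_p(\mathrm{cell})}\le Ch^k|u|_{W_p^k(\mathrm{cell})}$ is simply false for $u=P$, and the Deny--Lions/Bramble--Hilbert compactness argument cannot produce it. Since the proposition allows any $k>n/p$, and the Main Theorem uses $k\ge n$ when $p=1$, this breaks your cell-by-cell argument in the cases that matter. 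The repair is to exploit all the zeros available: $u=\tilde f-I_h\tilde f$ vanishes on the entire grid $h\ints^n$, so one should work on cubes of side comparable to $kh$, each containing a tensor sub-grid of about $k^n$ zeros; a polynomial of total degree less than $k$ vanishing on such a tensor grid is identically zero (induction on dimension plus univariate root counting), so the scaled Bramble--Hilbert estimate holds on these larger cubes with constant independent of $h$, and summing over a finitely overlapping cover recovers the global inequality. That corrected statement is in substance what the cited Madych--Potter result provides; with it, your proof coincides with the paper's.
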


\begin{proof}
  We note that, by the Stable Interpolation Property, $\|f - I_h f\|_p
  \le \|f -\tilde f\|_p + \|\tilde f - I_h \tilde f\|_p$, since $I_h
  \tilde f = I_h f$.  By the Approximation Property, the error
  $\|\tilde f-f\|_p $ is controlled by $h^k$. Thus, the analysis of
  interpolation error $\tilde f- I_h \tilde f$ reduces to
  investigating its behavior on $h\Z^n$, where the error vanishes. An
  estimate on the size of a smooth function having many zeroes was
  proved by Madych \& Potter \cite[Corollary 1]{MadPo}. Employing it,
  we obtain
$$ 
\|\tilde f -I_h\tilde f\|_p 
\le 
C h^k |\tilde f -I_h\tilde f|_{W_p^k}
\le 
C h^k \left(| \tilde f|_{W_p^k} + | I_f \tilde f|_{W_p^k}\right).
$$
Thus, the interpolation error is controlled entirely by the norm of
$I_h$ as an operator from $W_p^k$ to $W_p^k$ for $k>n/p$. Invoking
the stable interpolation property, we obtain
\begin{eqnarray*}
  \|f-I_hf\|_p 
  &\lesssim&   
  \|f - \tilde f\|_{p} +   \|\tilde f- I_h \tilde f\|_p\\
  &\lesssim&
  h^k \|f\|_{W_p^k} +   h^k \left(\|\tilde f\|_{W_p^k} + Q_p(h) 
    \| \tilde f\|_{W_p^k}\right)\\
  &\lesssim& 
  h^k \bigl(1+ Q_p(h) \bigr)\|f\|_{W_p^k}.
\end{eqnarray*}
\end{proof}

\begin{proof}[Proof of Theorem \ref{main_thm}] Finishing the proof
  only requires only showing that the Stable Interpolation Property
  holds, with the appropriate $Q_p(h)$. We do this in Lemma
  \ref{StableIntLemma}. There we also show that 
\[
Q_p(h) \le 
\left\{
\begin{array}{cc}
C_p& 1<p<\infty,\\
C(1+|\log h |)^n & p=1,\infty.
\end{array}
\right.
\]
Apart from $C_p$ being explicitly dependent on $p$, the two constants
depend on $n$, $k$, and the choice of the parameter $b$. Using this
estimate on $Q_p$ in Proposition~\ref{gaussian_pw_error} then yields
the result.

\end{proof}

\paragraph{Generalizations}
The interpolation problems considered here specifically involve only
spaces of band-limited functions and spaces of Gaussians. However, it
is worthwhile to broaden the context and describe these problems in a
more general way.

Let $\Xi\subset \Omega$ be a discrete set of points, which we will
call \emph{nodes} or \emph{centers}, which will play the role of
$h\Z^n$ above as sites for interpolation. Since $\Xi$ doesn't have to
lie on a grid, we will describe how dense $\Xi$ is in $\Omega$ using
the {\em fill distance} or \emph{mesh norm}, which is defined by
$h(\Xi):=\sup_{x\in \reals^n} \d(x,\Xi) $. Normally, one would not be
dealing with all possible $\Xi$, but rather with a specific class of
sets, $\mathbf{X}$.

Suppose that for each $\Xi$ in $\bfX$ there are two spaces of
functions, $\calf_\Xi$ and $\calg_\Xi$. These form families
$\calf_\bfX =\{\calf_\Xi\mid \xi \in \bfX\}$ and $\calg_\bfX
=\{\calg_\Xi\mid \xi \in \bfX\}$. Here, $\calf_\xi$ and $\calg_\Xi$
are analogous to the band-limited functions $\pw(b/h)$ and the
Gaussians, respectively. Both are contained in $C(\Omega)\cap \dot
W_p^k(\Omega)$. In addition, we will assume that for each $\calg_\Xi$
there is an interpolation operator $I_{\calg_{\Xi}}: C(\Omega)\to
\calg_\Xi$. Finally, we suppose that $\calf_\bfX$ obeys an
Approximation Property and $\calg_\bfX$ obeys a Stable Interpolation
Property with respect to the family $\calf_\bfX$. Then a nearly
identical proof to the one for Proposition \ref{gaussian_pw_error} will
establish this generalized version of that proposition.

%%%%%%%%%%%%%%%%%%%%%
%:Prop abstract_error
%%%%%%%%%%%%%%%%%%%%%
\begin{proposition}\label{abstract_error}
  Let $\Omega\subset \reals^n$ be a region satisfying a uniform cone
  condition. For $1\le p\le \infty$, let $\bfX$ be a collection of
  discrete subsets $\Xi \subset \Omega$ and let $\calf_\bfX$ be a
  family satisfying the Approximation Property.  If the interpolation
  operators $I_{\calg_\Xi}: C(\Omega)\to \mathcal{F}_{\Xi}$ satisfy the
  Stable Interpolation Property on $\calf_\bfX$, then for $k>n/p$,
$$
\|I_{\calg_\Xi} f- f\|_p \le C (1+ Q_p(\Xi)) h^k \|f\|_{W_p^k}\,,\
f\in W_p^k(\Omega).
$$
\end{proposition}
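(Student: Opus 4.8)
The plan is to transcribe the proof of Proposition~\ref{gaussian_pw_error} into the abstract setting, replacing the grid $h\ints^n$ by a general node set $\Xi\in\bfX$, the mesh parameter $h$ by the fill distance $h=h(\Xi)$, the band-limited family $\pw_p^k(b/h)$ by $\calf_\Xi$, and the Gaussian interpolant $I_h$ by $I_{\calg_\Xi}$. First I would invoke the Approximation Property for $\calf_\bfX$: given $f\in W_p^k(\Omega)\subset\dot W_p^k(\Omega)$ with $k>n/p$, it produces $\tilde f\in\calf_\Xi$ matching $f$ on $\Xi$ and satisfying $\|f-\tilde f\|_p\le Ch^k|f|_{W_p^k}$ together with $|\tilde f|_{W_p^k}\le C|f|_{W_p^k}$. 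Since interpolation depends only on sampled values and $\tilde f|_\Xi=f|_\Xi$, we have $I_{\calg_\Xi}\tilde f=I_{\calg_\Xi}f$, so the triangle inequality gives
\[
\|f-I_{\calg_\Xi}f\|_p\le\|f-\tilde f\|_p+\|\tilde f-I_{\calg_\Xi}\tilde f\|_p,
\]
with the first summand already of order $h^k|f|_{W_p^k}$.

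For the second summand the key observation is that $g:=\tilde f-I_{\calg_\Xi}\tilde f$ vanishes on all of $\Xi$. I would then apply a zeros estimate on $\Omega$: for $g\in\dot W_p^k(\Omega)$ with $k>n/p$ vanishing on a set of fill distance $h$, one has $\|g\|_{L_p(\Omega)}\le Ch^k|g|_{W_p^k(\Omega)}$. Splitting the seminorm, using the Stable Interpolation Property $|I_{\calg_\Xi}\tilde f|_{W_p^k}\le Q_p(\Xi)\|\tilde f\|_{W_p^k}$, and bounding $\|\tilde f\|_{W_p^k}\le C\|f\|_{W_p^k}$ (the $L_p$ part of this norm is controlled via $\|\tilde f\|_p\le\|f\|_p+\|f-\tilde f\|_p$ and the seminorm part by the Approximation Property), I obtain
\[
\|\tilde f-I_{\calg_\Xi}\tilde f\|_p\le Ch^k\bigl(|\tilde f|_{W_p^k}+|I_{\calg_\Xi}\tilde f|_{W_p^k}\bigr)\le Ch^k\bigl(1+Q_p(\Xi)\bigr)\|f\|_{W_p^k}.
\]
Adding the two estimates yields the claimed bound.

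The main obstacle is the zeros estimate in the general setting. In the concrete Gaussian case this role was played by the Madych--Potter corollary, which is stated for the full grid $h\ints^n$; here one must instead appeal to a sampling inequality valid on a domain $\Omega$ satisfying a uniform cone condition and for scattered node sets $\Xi$ described only through their fill distance $h(\Xi)$. Such zeros lemmas are by now standard, but the delicate point is uniformity: the constant $C$ must depend only on $\Omega$, $n$, $k$, and $p$, and not on the particular $\Xi\in\bfX$. This uniformity is precisely what guarantees that the final error bound depends on $\Xi$ only through the fill distance $h(\Xi)$ and the stability quantity $Q_p(\Xi)$, matching the stated conclusion.
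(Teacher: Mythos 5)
Your proposal is correct and follows essentially the same route as the paper: the paper gives no separate argument for Proposition~\ref{abstract_error}, stating only that ``a nearly identical proof to the one for Proposition~\ref{gaussian_pw_error}'' works, and your transcription (Approximation Property, $I_{\calg_\Xi}\tilde f = I_{\calg_\Xi} f$, triangle inequality, zeros estimate, Stable Interpolation Property) is exactly that proof. You also rightly flag the one step that is not purely mechanical---replacing the Madych--Potter grid estimate by a scattered-data sampling inequality on a domain with a uniform cone condition, with constants uniform over $\Xi\in\bfX$---a point the paper leaves implicit.
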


%%%%%%%%%%%%%%%%%%%%%%%%%%%%%%%%%%%
%:bandlimited Bandlimited Interpolation Section
%%%%%%%%%%%%%%%%%%%%%%%%%%%%%%%%%%%
\section{Band-limited Interpolation and the Approximation
  Property}\label{bandlimited}

In this section, we will prove Lemma \ref{BandlimitedIntLemma}, which
asserts that interpolation by band-limited functions satisfies the Approximation
Property. 

\begin{proof}
  Our aim is prove the existence of the interpolant $\tilde f$, along
  with the other properties. Let $\widehat\varphi(\xi)\in C^\infty$ be
  such that
\begin{itemize}
\item $\text{supp } \widehat\varphi(\xi) \subset\{\xi\colon \
  |\xi|_\infty \le \pi+\vp\}$
\item 
$\widehat\varphi(\xi) = 1$ if $|\xi|_\infty\le \pi-\vp$
\item
$\sum\limits_{j\in {\cl H}^n} \widehat\varphi(\xi-2\pi j) = 1$.
\end{itemize}
Note that if $\widehat\varphi(\xi)$ satisfies the first two
conditions, then setting $\widehat\rho(\xi) =
\frac{\widehat\varphi(\xi)}{\sum\limits_{j\in{\ints}^n}
  \widehat\varphi(\xi - 2\pi j)}$ defines a function which satisfies
all three. Our candidate for $\tilde f$ is the function
\[
 g(x):= \sum_{j\in {\ints}^n} f(hj)\varphi\left(\frac{x}h-j\right).
\]
Clearly $g$ is in $\pw(b/h)$ and also $g|_{h\Z^n} = f|_{h\Z^n}$, thus
\eqref{eq1} and \eqref{eq2} are valid for $\tilde f=g$. To see
\eqref{eq3} and \eqref{eq4} also hold, write
\[
f = f_0+f_1\qquad
\text{where}\quad \hat f_0(\xi) := \hat f(\xi) \widehat\varphi(2h\xi). 
\]
Define $g = g_0+g_1$ accordingly, i.e.,
\[
g_0(x) := \sum_{j\in {\ints}^n} f_0(hj) \varphi\left(\frac{x}h-j\right),
\]
and $g_1:= g- g_0$.
Note that (for sufficiently small $\vp$)
\begin{align*}
\hat g_0(\xi) 
&= 
\left\{\sum_{j\in {\ints}^n} \hat f_0
\left(\xi - \frac{2\pi j}h\right)\right\} 
\widehat\varphi(h\xi)
= \hat f_0(\xi) \widehat\varphi(h\xi) = \hat f_0(\xi)
\end{align*}
so
\begin{equation}\label{eq5}
g_0(x) = f_0(x).
\end{equation}
The Sobolev seminorms of $g_0$ and $f_0$ are controlled by the seminorm of $f$:
\begin{equation}\label{eq6}
|f_0|_{W_p^k}
\le 
\sup_{|\alpha|=k} 
  \|D^{\alpha}(f*\rho)\|_{L_p} 
\le 
\|\rho\|_{L_1} |f|_{W_p^k},
\qquad 1\le p\le \infty
\end{equation}
where $\rho(x) = \frac1{(2h)^n} \phi(x/2h)$ and 
$ \|\rho\|_{L^1} = \|\phi\|_{L^1}$ is independent of $h$.
On the other hand, since 
$f_1= f - f*\rho $, 
we have,
\begin{equation}\label{eq7}
 \|f_1\|_{L^p} = \|f - (f*\rho)\|_p\le Ch^k |f|_{W_p^k},
\end{equation}
the usual error from  mollification by a band-limited mollifier.
What remains is to control the seminorm of $g_1$ and to bound the error 
$\|g_1 - f_1\|_p$.

For $|\alpha|=r$ with $0\le r\le k$, we have
\begin{align*}
D^{\alpha} g_1(x) 
&= 
\sum_{j\in {\ints^n}}  
  f_1(hj)D^{\alpha} 
  \left[\phi\left(\frac{x}h-j\right)\right]\\
&= 
h^{-r} 
\sum 
  f_1(aj) 
  [D^{\alpha} \phi] \left(\frac{x}h-j\right).
\end{align*}

When $p=\infty$, it follows that 
$$
\|D^{\alpha} g_1\|_{\infty}  
\le  
h^{-r} \sup_{j\in \ints^n} |f_1(hj)| 
\times 
\sup_{x\in \reals^n}\sum_{j\in\ints^n}
\left| 
  [D^{\alpha} \phi] 
  \left(\frac{x}h-j\right)
\right|
\le
C h^{-r} \sup_{j\in \ints^n} |f_1(hj)|,
$$
since the fact that $\phi$ is a Schwartz function implies that 
$\sup_{y\in \reals^n}\sum_{j\in\ints^n}
\left| [D^{\alpha} \phi] \left(y-j\right)\right|\le C$.

For $p<\infty$, we have
\begin{align*}
  |D^{\alpha}g_1(x)|^p &= (h^{-r})^p \left|\sum f_1(hj)
    [D^{\alpha}\phi]
    \left(\frac{x}h-j\right)\right|^p\\
  &\le h^{-rp} \left\{ \sum_j |f_1(h j)|^p \, \left| \left( 1 + \left|
          \frac{x}h-j \right| \right)^{2n} [D^{\alpha}\phi] \left(
        \frac{x}h-j \right) \right|^p
  \right\}\\
  &\quad \left.\times \left\{\sum_j \left(1 +
        \left|\frac{x}h-j\right|\right)^{-2np'}\right\}^{p/p'}\right. .
\end{align*}
Since $\phi$ is a Schwartz function, $\big\|\big(1+ \big|\frac{x}h
-j\big|\big)^{2n} [D^{\alpha}\phi]
\big(\frac{x}h-j\big)\big\|^p_{L^p(\reals^n)}$ is bounded by
$\widetilde C_rh^n$ where $h^n$ comes about from the substitution $y=
\frac{x}h-j$ in the integration over $\reals^n$.  Consequently,
\[
\|D^{\alpha}g_1\|_{L^p} \le h^{-r}\widetilde C_r\left\{\sum |f_1(hj)|^p h^n\right\}^{1/p}\times 
\max_{x \in \reals^n} \left\{\sum_j \left(1 + \left|\frac{x}h-j\right|\right)^{-2np'}\right\}^{p/p'}
\]
The sequence $j\mapsto  \big(1 + \big(\big| y-j\big|\big)^{-2n}$ 
is bounded in $\ell_p$, uniformly for all $y\in \reals^n$,
by a constant $C$. 
Thus, for $1\le p\le \infty$,
\begin{align}\label{eq8}
|g_1|_{W_p^r} 
&\le C_r h^{-r} h^{n/p} \|f_1|_{h\ints^n}\|_{\ell_p(\ints^n)}
\\
&\le C_r h^{-r} \left[\|f_1\|_{L_p} + h^k | f_1|_{W_p^k}\right].\notag
\end{align}
This follows by scaling the estimate 
$\|F|_{\ints^n}\|_{\ell_p(\ints^n)} 
\le 
C(\|F\|_{L_p(\reals^n)} + |F|_{W_p^k(\reals^n)})$, 
which holds for $k>n/p$ (this is
a simple consequence of the Sobolev embedding theorem).  
By applying \eqref{eq6} and \eqref{eq7}, we obtain
\begin{equation*}
|g_1|_{W_p^r}
\le  C h^{k-r} |f|_{W_p^k}.
\end{equation*}
Finally, we have
\begin{align*}
 |g|_{W_p^k} &= |g_0+g_1|_{W_p^k} = |f_0+g_1|_{W_p^k}
\le |f_0|_{W_p^k} + |g_1|_{W_p^k} \le {C} |f|_{W_p^k}
\end{align*}
which follows from \eqref{eq6} and above. So \eqref{eq3} is
established for $1\le p\le \infty$. Also from \eqref{eq5}, \eqref{eq6}
and \eqref{eq8} we get
\begin{align*}
\|f-g\|_{L^p} &= \|f_1-g_1\|_{L^p} \le \|f_1\|_{L^p} + \|g_1\|_{L^p}\\
&\le Ch^k |D^kf|_{L^p}.
\end{align*}
and \eqref{eq4} holds for $1\le p\le\infty$. Hence, it follows that
$g$ is indeed $\tilde f$.
\end{proof}
%%%%%%%%%%%%%%%%%%%%%%%%%%%%%%%%%%%%%%%%%%%%%%%
%Section
%
%Cardinal interpolation with Gaussians
%%%%%%%%%%%%%%%%%%%%%%%%%%%%%%%%%%%%%%%%%%%%%%%%
\section{Cardinal Interpolation with Gaussians}\label{CIG}
%%%%%%%%%%%%%%%% 
We now investigate the Fourier transform of the Lagrange function, and, through
this, the Fourier transform of the gridded Gaussian interpolant. Since 
$\widehat{\chi_h}(\xi) = 
h^n 
\widehat{L_{\lambda}^{[n]}}\bigl(h\cdot \xi \bigr)$ with
$\lambda= h^2$, and because 
$$
\widehat{L_{\lambda}^{[n]}}(\omega) 
= 
\frac{\exp\bigl(-|\xi|^2/(4\lambda)\bigr)}{\sum_{k\in \ints^n}\exp\bigl(-(|\xi-2\pi k|)^2/(4\lambda)\bigr)}
$$
we have
%%%%%%%%%%%%%%%%%%%%
%:Eqn mult_nef
%%%%%%%%%%%%%%%%%%%%%
\begin{eqnarray} 
\widehat{\chi_h}(\xi) 
&=&
h^{n} \frac{\exp\bigl(
-\frac{1}{4}| \xi|^2\bigr)}{\sum_{k\in \ints^n}\exp\bigl(-\frac{1}{4}| \xi-\frac{2\pi k}{h}|^2\bigr)}  
=: h^n m_h^{[n]}(\xi).\label{mult_nef}
\end{eqnarray}
Throughout the rest of this article, the multiplier $m_h^{[n]}$ is the subject of much of our 
investigation. Its multiplier norm controls the the stability of the interpolation process
on Sobolev spaces, and ultimately, it determines the rate of decay of interpolation 
error. Because it is, evidently, a $n$-fold tensor product of univariate multipliers:
$m_h^{[n]}(\xi)  = m_h^{[1]}(\xi_1)  \cdots m_h^{[1]}(\xi_n)$, many of
our results can be obtained by considering the 1 dimensional case, where we suppress 
the dimension to write $m_h:=m_h^{[1]}.$

Finally, formula (\ref{mult_nef}) allows us to easily express the Fourier transform of the interpolant:
%%%%%%%%%%%%%%%%%%%%%%%%%%%%
%:Eqn IntFT
%%%%%%%%%%%%%%%%%%%%%%%%%%%%
\begin{eqnarray}\label{IntFT}
\widehat{I_h f}(\xi) 
&=& 
\left[ \sum f(h j) \chi_h( \cdot - hj) \right]
\widehat{\phantom{\int} }(\xi)\nonumber 
=
h^n 
\left[  \sum_{j\in \ints^n} f(h j) e^{- i\langle hj,\xi\rangle} \right] 
m_h^{[n]}(\xi)\\
&=& 
\left[  
  \sum_{\beta \in 2\pi\ints^n} \widehat{f}\left(\xi - \frac{\beta}{h}\right) 
\right] 
m_h^{[n]}(\xi).
\end{eqnarray}
(The final equality follows from Poisson's summation formula.)
%%%%%%%%%%%%%%%%%%%%%%%%%%
%Subsection
%
%Univariate Multiplier
%%%%%%%%%%%%%%%%%%%%%%%%%%
\subsection{The univariate multiplier $m_h$}

%$$
%\|m_h\|_{\mult{p}} :=\sup_{f\ne 0} \| f*(m_h)^{\vee}\|_p/\|f\|_p.
%$$ 
In the interest of keeping results self contained, we now provide some simple estimates
on the size of the multiplier. Such estimates (and stronger ones) could, with some effort, 
be gleaned from the work of Riemenschneider and Sivakumar. However, the ones we provide here are easier to obtain than those presented in 
\cite[Theorem 3.3 and Corollary 3.4]{RS2}, yet totally sufficient for our purposes.

We proceed in two stages. In the first stage we obtain estimates on $m_h^{\vee}$ that hold independently of $h$, but are rather slow (in particular, they do not show that
${m_h}^{\vee}$ is integrable). In the second stage, we demonstrate that ${m_h}^{\vee}(\xi)$ decays like $\mathcal{O}(|x|^{-2})$, and is, hence, integrable, but these estimates depend strongly on $h$.
 
{\bf First estimate of ${m_h}^{\vee}$:} Since $m_h(\xi)>0$, we have 
$|{m_h}^{\vee}(x)| \le {m_h}^{\vee}(0) =1$. On the other hand,
 from (\ref{mult_nef}),  
$$
m_h(\xi) 
= 
\left(\sum_{k\in \ints} e^{-\frac{\pi^2 k^2}{h^2}}e^{-\frac{\pi k \xi}{h}} \right)^{-1}
= 
\left(1+2\sum_{k=0}^{\infty} e^{-\frac{\pi^2 k^2}{h^2}}  \cosh(\pi k \xi /h) \right)^{-1}
=: \bigl(d_0(\xi)\bigr)^{-1}.
$$
Hence 
$
m_h'(\xi) 
= 
-\bigl(m_h(\xi)\bigr)^2 d_1(\xi)
$ 
and 
$m_h''(\xi) =  2 \bigl(m_h(\xi)\bigr)^3 \bigl(d_1(\xi)\bigr)^2 
- \bigl(m_h(\xi)\bigr)^2n_2(\xi)$,
where we have defined $d_1(\xi):= d_0'(\xi)$ and $d_2:=n_1'(\xi)$. Hence,
$$
d_1(\xi)= 
2
%\left(
\sum_{k=1}^{\infty} 
  \left(
    \frac{\pi k}{h}\right) e^{-\frac{\pi^2 k^2}{h^2}} 
    \sinh\left(\frac{\pi k \xi}{h}
  \right)
%\right)
\quad
\text{and}
\quad
d_2(\xi) =  
2
  \sum_{k=1}^{\infty} 
    \left(\frac{\pi k}{h}\right)^2 e^{-\frac{\pi^2 k^2}{h^2}}  
    \cosh(\frac{\pi k \xi}{h}).
$$
It follows that $m_h'(\xi)<0$ for positive $\xi$, and, by symmetry, $m_h'(\xi)>0$ for negative $\xi$. Therefore,
$\int_0^{\infty} |m_h'(\xi)|\dif \xi = - \int_0^{\infty} m_h'(\xi) \dif \xi = m_h(0)<1.$ 
It follows that
\begin{equation}\label{L1_mult_neriv}
\|m_h'\|_1<2,
\end{equation}
which implies that $|{m_h}^{\vee}(x)|< 2/|x|$. Thus,
\begin{equation}\label{first_mult_est}
|{m_h}^{\vee}(x)|\le \min(1, 2/|x|).
\end{equation}

{\bf Second estimate of ${m_h}^{\vee}$:}  
Using some simple algebraic manipulations, we may rewrite the second
derivative as 
\begin{equation}\label{second_neriv}
m_h''(\xi) =  m_h(\xi) 
\left[2 
  \left( 
  \frac{
    \sum_{k\in \ints} \left(\frac{k\pi}{h}\right) e^{-\frac{1}{4}|\xi - \frac{2\pi k}{h}|^2} 
  }{
    \sum_{k\in\ints} e^{-\frac{1}{4}|\xi - \frac{2\pi k}{h}|^2} 
  } \right)^2 
  -
    \left( 
  \frac{
    \sum_{k\in \ints} \left(\frac{k\pi}{h}\right)^2 e^{-\frac{1}{4}|\xi - \frac{2\pi k}{h}|^2} 
  }{
    \sum_{k\in\ints} e^{-\frac{1}{4}|\xi - \frac{2\pi k}{h}|^2} 
  } \right) 
\right]
\end{equation}
This leads us to the the following estimate:
%%%%%%%%%%%%%%%%%%%%%%%%%%%%%%%%%%%%%%%%%%
\begin{lemma}\label{mult_second_neriv}
Let $h\le1$. There exists a constant $C$  so that
%for $\xi \in [0,2\pi/h]$, 
%$$|m_h''(\xi)|\le \frac{C}{h^2}.$$
for $\tilde{k}=1,2, \dots$, and  $\xi \in [\frac{2\pi(\tilde{k}-1)}{h}, \frac{2\pi\tilde{k}}{h}]$,
$$|m_h''(\xi)|\le C \left(\frac{\tilde{k}}{h}\right)^2  m_h(\xi).$$
\end{lemma}
\begin{proof}
We write 
$$I:=  \left( 
  \frac{
    \sum_{k\in \ints} \left(\frac{k\pi}{h}\right) e^{-\frac{1}{4}|\xi - \frac{2\pi k}{h}|^2} 
  }{
    \sum_{k\in\ints} e^{-\frac{1}{4}|\xi - \frac{2\pi k}{h}|^2} 
  } \right)^2 
  \quad
  \text{and} 
  \quad
  II:= 
      \left( 
  \frac{
    \sum_{k\in \ints} \left(\frac{k\pi}{h}\right)^2 e^{-\frac{1}{4}|\xi - \frac{2\pi k}{h}|^2} 
  }{
    \sum_{k\in\ints} e^{-\frac{1}{4}|\xi - \frac{2\pi k}{h}|^2} 
  } \right) 
  $$
 We split the numerator of $I$ to isolate its two principal terms
$$
\left|
\sum_{k\in \ints} \left(\frac{k\pi}{h}\right) e^{-\frac{1}{4}|\xi - \frac{2\pi k}{h}|^2} 
\right|
\le
\left(\frac{\tilde{k}\pi}{h}\right)
\left(e^{-\frac{1}{4}|\xi- \frac{2\pi (\tilde{k}-1)}{h}|^2}
+ 
e^{-\frac{1}{4}|\xi -\frac{2\pi \tilde{k}}{h}|^2}\right)
+
2
\sum_{k=1}^{\infty} \left(\frac{k\pi}{h}+\frac{\tilde{k}\pi}{h}\right) e^{-| \frac{\pi k}{h}|^2}.
$$ 
Since it is a series of nonnegative terms, the denominator can be bounded below by its 
two largest summands:
$
\sum_{k\in\ints} e^{-\frac{1}{4}|\xi - \frac{2\pi k}{h}|^2} 
\ge 
e^{-\frac{1}{4}|\xi- \frac{2\pi (\tilde{k}-1)}{h}|^2}+ 
e^{-\frac{1}{4}|\xi -\frac{2\pi \tilde{k}}{h}|^2}
\ge 
2 e^{-\frac{\pi^2}{4h^2}}.
$
Therefore,
$$I\le   \left[   \left( 
  \frac{\tilde{k}\pi}{h}\right)+ 
  \sum_{k=1}^{\infty} \left(\frac{k\pi}{h}+\frac{\tilde{k}\pi}{h}\right) e^{-\bigl(| \frac{\pi k}{h}|^2-\frac{\pi^2}{4h^2}\bigr)}
  \right]^2.
  $$
 A similar argument shows that 
 $$
 II
 \le
 \left( 
  \frac{\tilde{k}\pi}{h}\right)^2+ 
 \sum_{k=1}^{\infty} \left(\frac{k\pi}{h}+\frac{\tilde{k}\pi}{h}
 \right)^2 
e^{-\bigl(| \frac{\pi k}{h}|^2-\frac{\pi^2}{4h^2}\bigr)}.
  $$
\end{proof}
An immediate consequence is that for $h\le 1$, $\|m_h''\|_{L_1} \le C h^{-3}$. 
Indeed, one
may estimate the integral on an interval around the origin $[-2\pi/h,2\pi,h]$ and
along the punctured real line
$\Omega :=\reals\setminus[-2\pi/h,2\pi/h]$ to obtain
$$\int_{-2\pi/h}^{2\pi/h}|m''(\xi)| \dif \xi\le C/h^3$$
%\quad
%\text{and}
%\quad
and
$$\int_{\Omega}|m''(\xi)| \dif \xi\le 2\sum_{k=2}^{\infty} \frac{2\pi}{h} %\max_{\left[\frac{2\pi(\tilde{k}-1)}{h}, \frac{2\pi\tilde{k}}{h}\right]}|m''(\xi)|
\|m''\|_{L_{\infty}(\left[\frac{2\pi(k-1)}{h}, \frac{2\pi k}{h}\right])}
\le 
2C\sum_{k=2}^{\infty} \frac{2\pi}{h} 
\left(\frac{k}{h}\right)^2  e^{-\bigl(| \frac{\pi k}{h}|^2-\frac{\pi^2}{4h^2}\bigr)}
\le
C.$$
It follows that
\begin{equation}\label{second_mult_est}
|{m_h}^{\vee}(x)| \le \frac{C}{h^3 |x|^2}.
\end{equation}
%%%%%%%%%%%%%%%%%%%%%%%%%%%%%%%%%%%%%%%%%%%%%%%%%%%%%%%%%%%%%%
%
%
%
%%%%%%%%%%%%%%%%%%%%%%%%%%%%%%%%%%%%%%%%%%%%%%%%%%%%%%%%%%%%%%%
\subsection{The multiplier norm of $m_h^{[n]}$}
In the Section \ref{stable_interpolation} we investigate the Sobolev stability of Gaussian interpolation
on spaces of band-limited functions. Of particular importance are the operator norms
of the convolution operators with Fourier multiplier $m_h^{[n]}$.
These can be estimated in the case $p=1$ and $\infty$ 
by using the bounds on the cardinal interpolant obtained in \cite[Section 3]{RS2} (although the ones developed in the previous subsection, (\ref{first_mult_est}) and (\ref{second_mult_est}), suffice).
In case $1< p< \infty$, multiplier norms are estimated by appealing directly to the formula (\ref{mult_nef}), and making a comparison to the Hardy--Littlewood maximal function and the (maximal) Hilbert transform 
(this is a continuous version of an idea used in \cite[Theorem 3.1]{MRR}).
%%%%%%%%%%%%%%%%%%%%%%%
%:multiplier_norm Multiplier Norm Lemma
%%%%%%%%%%%%%%%%%%%%%%%
\begin{lemma}\label{multiplier_norm}
For $1< p < \infty$, there is a constant $C_p$ so that
$$
\|m_h^{[n]}\|_{\mult{p}}\le C_p .
$$
For $p=1,\infty$ there is a constant $C$ so that
$$
\|m_h^{[n]}\|_{\mult{1}}=\|m_h^{[n]}\|_{\mult{\infty}}\le C (1+| \log h|)^n.
$$
\end{lemma}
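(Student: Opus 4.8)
The plan is to exploit the tensor-product structure $m_h^{[n]}(\xi) = m_h(\xi_1)\cdots m_h(\xi_n)$ noted after \eqref{mult_nef}, which reduces everything to the univariate multiplier $m_h$. Since the $n$-dimensional convolution operator is the composition of the one-dimensional operators applied coordinatewise, a Fubini argument gives $\|m_h^{[n]}\|_{\mult{p}} \le \|m_h\|_{\mult{p}}^n$ for every $p$; so it suffices to prove $\|m_h\|_{\mult{p}} \le C_p$ for $1<p<\infty$ and $\|m_h\|_{\mult{1}} \le C(1+|\log h|)$, and then raise to the $n$-th power. The two endpoint cases and the interior range require genuinely different tools.

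For $p=1$ and $p=\infty$ I would use the fact that a convolution operator has $L_1\to L_1$ and $L_\infty\to L_\infty$ norm equal to the $L_1$ norm of its kernel, so $\|m_h\|_{\mult{1}} = \|m_h\|_{\mult{\infty}} = \|m_h^{\vee}\|_{L_1}$. The task is then to integrate the two pointwise bounds already in hand: $|m_h^{\vee}(x)| \le \min(1, 2/|x|)$ from \eqref{first_mult_est} and $|m_h^{\vee}(x)| \le C/(h^3|x|^2)$ from \eqref{second_mult_est}. Splitting $\reals$ into $|x|\le 2$, $2\le |x|\le R$, and $|x|\ge R$, the first piece contributes $O(1)$, the second contributes $O(\log R)$ via the $2/|x|$ bound, and the third contributes $O(1/(h^3R))$ via the second estimate. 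Choosing the crossover $R\sim h^{-3}$ balances these and yields $\|m_h^{\vee}\|_{L_1}\le C(1+|\log h|)$; raising to the $n$-th power gives the stated $C(1+|\log h|)^n$.

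For $1<p<\infty$ the point is to get a bound independent of $h$, and here I would lean on the monotonicity established in the first-estimate computation: $m_h$ is even, positive, and strictly decreasing in $|\xi|$ with $m_h(0)<1$ and $m_h(\infty)=0$. This permits the layer-cake representation
\[
m_h(\xi) = \int_0^\infty (-m_h'(t))\,\mathbf{1}_{[-t,t]}(\xi)\,\dif t,
\qquad \int_0^\infty (-m_h'(t))\,\dif t = m_h(0) < 1,
\]
with nonnegative weight $-m_h'(t)$. The multiplier $\mathbf{1}_{[-t,t]} = \tfrac12(\mathrm{sgn}(\xi+t) - \mathrm{sgn}(\xi-t))$ is a difference of frequency-shifted signum multipliers, each of which is a modulated Hilbert transform; since modulation is an $L_p$ isometry, the corresponding operator $S_t$ satisfies $\|S_t\|_{L_p\to L_p}\le A_p$ uniformly in $t$, where $A_p$ is the M. Riesz constant for the Hilbert transform. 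Applying Minkowski's integral inequality to $T_h f = \int_0^\infty (-m_h'(t))\,S_t f\,\dif t$ then gives $\|m_h\|_{\mult{p}} \le A_p\, m_h(0) \le A_p = C_p$, uniformly in $h$, and the tensor bound finishes it. This is the continuous analogue of the maximal-function/Hilbert-transform comparison referenced from \cite{MRR}; one could equally dominate $|T_h f|$ pointwise by the Hardy--Littlewood maximal function and the maximal Hilbert transform.

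The main obstacle is precisely the $1<p<\infty$ uniformity. A direct appeal to the Mikhlin--H\"ormander theorem would require $\sup_\xi |\xi|\,|m_h'(\xi)|$ bounded independently of $h$, which is not transparent from the series for $m_h'$ and risks an $h$-dependent constant; moreover the kernel bound $|m_h^{\vee}(x)|\le 2/|x|$ is not integrable, so the maximal function alone cannot absorb the tail and the cancellation supplied by the Hilbert transform is essential. The subordination argument sidesteps all of this by trading smoothness estimates for the single structural fact that $m_h$ is monotone with total variation $m_h(0)<1$, reducing the matter to the scale-invariant $L_p$ boundedness of the Hilbert transform. The only remaining point to verify is the Fubini interchange justifying the representation of $T_h$ on Schwartz functions, which is routine since $\widehat{f}$ decays rapidly.
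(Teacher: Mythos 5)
Your proof is correct, and two of its three components are exactly the paper's: the tensor-product reduction to $n=1$, and the endpoint case, where the paper likewise bounds $\|m_h\|_{\mult{1}}=\|m_h\|_{\mult{\infty}}\le\|m_h^{\vee}\|_{L_1}$ and integrates the pointwise bounds (\ref{first_mult_est}) and (\ref{second_mult_est}) with crossover at $h^{-3}$ to get $C(1+|\log h|)$. For $1<p<\infty$, however, you take a genuinely different route. The paper argues by duality: with $\|f\|_p=\|g\|_{p'}=1$ it estimates $\int\!\int f(x)\,m_h^{\vee}(t-x)\,g(t)\,\dif x\,\dif t$, splits at $|t-x|=h$, dominates the near part by the Hardy--Littlewood maximal function of $f$, and for the far part integrates by parts in the Fourier representation (trading $m_h$ for $m_h'$) and applies Fubini, so that the inner integral becomes a truncated Hilbert transform; the strong $(p,p)$ bound for the \emph{maximal} Hilbert transform together with $\|m_h'\|_{L_1}<2$ (equation (\ref{L1_mult_neriv})) finishes the proof. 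You instead subordinate the multiplier itself to band projections via the layer-cake identity $m_h(\xi)=\int_0^\infty(-m_h'(t))\,\mathbf{1}_{[-t,t]}(\xi)\,\dif t$, and then use Minkowski's integral inequality plus the uniform $L_p$ bound for the projections $S_t$ (differences of modulated Hilbert transforms), the total mass being $\int_0^\infty(-m_h'(t))\,\dif t=m_h(0)<1$. Both proofs ultimately rest on the same two ingredients --- the sign and integrability of $m_h'$, and M.~Riesz's theorem --- but yours is the classical bounded-variation multiplier theorem in disguise: it avoids maximal operators and duality altogether and makes transparent that the $h$-uniform constant is just $A_p\,m_h(0)$. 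What the paper's kernel-side argument buys in exchange is that it is the continuous analogue of the discrete comparison of \cite{MRR} (a point the authors want to make) and stays entirely at the level of the kernel estimates already derived. One detail you should state explicitly: the layer-cake identity needs $\lim_{|\xi|\to\infty}m_h(\xi)=0$ in addition to evenness and monotonicity. This is immediate from the paper's formula $m_h=1/d_0$, since $d_0(\xi)\ge 1+2e^{-\pi^2/h^2}\cosh(\pi\xi/h)\to\infty$; and even if the limit were a nonzero constant $c$, you would only pick up a harmless extra term $c\,\mathrm{Id}$, so this is a verification, not a gap.
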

%%%%%%%%%%%%%%%%%%%%
%
%%%%%%%%%%%%%%%%%%%%
\begin{proof}
Because $m_h^{[n]}$ is a tensor product of univariate multipliers, it suffices to consider the case $n=1$. 

For $1/p+ 1/p' =1$, we have $\|m_h\|_{\mult{p}}=\|m_h\|_{\mult{p'}}$.
%In the $p=2$ case, $M_{h,2} = \|m_h\|_{\infty} \le 1,$
%by (\ref{mult_nef}).
Thus,
$\|m_h\|_{\mult{1}}=\|m_h\|_{\mult{\infty}}\le \|{m_h}^{\vee}\|_1$, by H\"{o}lder's inequality.
>From (\ref{first_mult_est}) and (\ref{second_mult_est}), we have
%%%%%%%%%%%%%%%%%
\begin{eqnarray*}
\int_{\reals} |{m_h}^{\vee}(x)|\dif x 
&\le& 
2  \left[1  +2 \int_1^{h^{-3}} (x)^{-1} \dif x + Ch^{-1}\int_{h^{-3}}^{\infty} (h x)^{-2} \dif x \right]
\\
&=&
\left[C  + 12|\log h| \right]\le  C (1+|\log h|)
\end{eqnarray*}
%%%%%%%%%%%%%%%

We now consider the case $1<p<\infty$. 
Let $f\in L_p$ and let $g\in L_{p'}$  with $\|f\|_{p} = 1 = \|g\|_{p'}$.
We can estimate $\|m_h\|_{\mult{p}}$ by the supremum of the expression
$\left|\int_{\reals}\int_{\reals} f(x) {m_h}^{\vee}(t-x) g(t) \dif x \dif t \right|$. To this end, let
$\Omega_h(t):=\reals\setminus (t-h,t+h)$. 
Then, 
\begin{eqnarray*}
\left|\int_{\reals}\int_{\reals} f(x) {m_h}^{\vee}(t-x) g(t) \dif x \dif t \right|
&\le &
\left|\int_{\reals}\int_{t-h}^{t+h} f(x) {m_h}^{\vee}(t-x) g(t) \dif x \dif t \right|\\
&\mbox{}&+ 
\left|\int_{\reals}\int_{\Omega_h(t)} f(x) {m_h}^{\vee}(t-x) g(t) \dif x \dif t \right|\\
&=:&I+II
\end{eqnarray*}
The first integral 
$I\le\int_{\reals}h^{-1}\int_{t-h}^{t+h}|f(x)| |g(t)| \dif x \dif t ,$ 
can be compared to an integral involving the maximal function of $f$, 
$f^{\sharp}(t) := \sup_{\epsilon>0} (2\epsilon)^{-1}
\int_{t-\epsilon}^{t+\epsilon} |f(x)|\dif x$. Thus,
$$
I \le 2\int_{\reals}  f^{\sharp}(t)|g(t)|\dif t
\le 
2\|f^{\sharp}\|_p \|g\|_{p'} 
\le 
2C_p\|f\|_p\|g\|_{p'}.$$

To treat $II$, we note that, 
$m_h'(\xi)$ is integrable, and
$II$ can be estimated by employing the
Fourier transform of ${m_h}^{\vee}$, 
(this is a trick similar to the one used in \cite{MRR}):
\begin{eqnarray*}
II &=&
\left|
  \int_{\reals}\int_{\Omega_h(t)} \int_{\reals}
      f(x) m_h (\xi) e^{i (t-x)\xi} g(t)
  \dif \xi \dif x \dif t 
\right|\\
&=&
\left|
  \int_{\reals}\int_{\Omega_h(t)} \int_{\reals}
      \frac{f(x)}{i(x-t)} m_h' (\xi) e^{i (t-x)\xi} g(t)
  \dif \xi \dif x \dif t 
\right|\\
&=&
\left|
   \int_{\reals}  \ m_h' (\xi) \int_{\reals}\int_{\Omega_h(t)} 
      \frac{f(x)}{i(x-t)} e^{i (t-x)\xi} g(t)
   \dif x \dif t   \dif \xi.
\right|
\end{eqnarray*}
The second equality follows by integration by parts, while the third follows by Fubini's 
theorem, since $m_h'$  is integrable on $\reals$ and $ \frac{f(x)}{(x-t)} g(t){\1}_{ \Omega_t}(x)$ is integrable on $\reals^2$. It follows that
\begin{eqnarray*}
II &\le& \int_{\reals} \left|  \ m_h' (\xi) \right| 
     \int_{\reals}
        \left|\int_{\Omega_h(t)}\frac{e^{-i\xi x}f(x)}{x-t}\dif t \right|  |g(t)|
 \dif t  \dif \xi\\
&\le& \int_{\reals} \left|   m_h' (\xi) \right| 
   \int_{\reals}
      \bigl[\mathbf{H}(e^{-i\xi \cdot}f)\bigr] (t)\ |g(t)|\,
 \dif t   \dif \xi\\
  &\le& C_p \|m_h'\|_1\|f\|_p\|g\|_{p'}\le 2C_p \|f\|_p\|g\|_{p'}.
\end{eqnarray*}
In the second inequality, we use the {\em maximal Hilbert transform}  
$ \bigl[\mathbf{H}F\bigr] (x) := \sup_{\epsilon>0}|\int_{\Omega_{\epsilon}(t)}F(t) \frac{\dif t}{x-t}|$.
The third inequality follows from the fact that the maximal Hilbert transform is
strong type $(p,p)$ (i.e., $\|\mathbf{H}F\|_p\le C_p \|F\|_p$) 
for $1<p<\infty$, as observed in \cite[Theorem 4.9]{BeSh}. 
The fourth inequality follows from (\ref{L1_mult_neriv}).
\end{proof}

\section{Stable interpolation of band-limited functions by
  Gaussians}\label{stable_interpolation}
We consider in this section interpolation of functions in
$\pw(b/h)$ %from the space $S_h$
and show that Gaussian interpolation restricted to such functions is
stable with respect to each Sobolev norm $W_p^k$, with $k>n/p$.

Before stating and proving the result in its full generality, we indicate how the proof works
in the univariate case.
We use (\ref{IntFT}) to write, for a nonnegative integer $ k$,
%%%%%%%%%%%%
\begin{eqnarray*}
\widehat{ (I_h f)^{(j)}}(\xi) &=& \xi^{j} 
\left[ \sum_{\beta\in 2\pi \ints} \widehat{f}(\xi - \frac{\beta}{h})\right] m_h(\xi)
=
\sum_{\beta \in 2\pi \ints} \widehat{G_{j,\beta}}(\xi) 
\end{eqnarray*}
where
$
G_{j,\beta} 
:= 
D^{j} \left[(e^{i  (\cdot) \beta/h)}f)* (m_h)^{\vee}\right].
$
Clearly, 
$\|I_h f\|_{W_p^k} \le 
%\sum_{|\alpha|\le k} \bigl[\|F_{\alpha}\|_p + \sum_{\beta\ne 0}\|G_{\alpha,\beta}\|_p\bigr]$.
\sum_{j=0}^{ k}\sum_{\beta}\|G_{j,\beta}\|_p.$ 

The result we are after requires estimating $\|G_{j,\beta}\|_p$ for various values of $\beta$.
These estimates fall into three
categories: $\beta = 0$, $|\beta|=2\pi$, and $|\beta|>2\pi$.
%For the case $\beta=0$, this is accomplished fairly easily. 

{\bf Estimating $\|G_{j, 0}\|_p$:} 
In this case,
$
 \left\| D^{j}   f * (m_h)^{\vee} \right\|_p
 \le 
 \|D^{j}f\|_{p} \|m_h\|_{ \mult{p}}
$
from which we obtain
$
\sum_{j\le k}\|G_{j,0}\|_p 
\le 
C  \|f\|_{W_p^k} \|m_{h}\|_{\mult{p}}$. 
Thus, from Lemma \ref{multiplier_norm},
\begin{equation}\label{=0}
\sum_{j\le k}\|G_{j,0}\|_p 
\le 
\begin{cases}
C_p   \|f\|_{W_p^k}&\quad 1<p<\infty\\
C  (1+|\log h|)\|f\|_{W_p^k}& \quad p=1,\infty 
\end{cases}  
\end{equation}
where $C$ and $C_p$ depend only on $k$ and $p$.

For some of the other terms, we need to use a special cutoff function.  We first consider a smooth bump function,
$\upsilon$. It is a non-negative $C^{\infty}$ function satisfying
\begin{itemize}
\item $\upsilon(\xi)=0$ if $|\xi|>2\epsilon$ 
%\item  
and 
$\upsilon(\xi)=1$ if $|\xi|<\epsilon$,
%\item $\upsilon$ is decreasing on $[\epsilon, 2\epsilon]$,
\item $\upsilon(-\xi) = \upsilon(\xi)$.
\end{itemize}
We use $\upsilon$ to construct univariate cutoff functions $\varphi$ having support in $[-\pi-2\epsilon,\pi+2\epsilon]$:
\begin{itemize}
\item $\varphi(t) =1$ for $-\pi \le t\le\pi$;
\item $\varphi(-t) = \varphi(t)  = \upsilon(t-\pi)$ for $t\ge \pi$.
\end{itemize}

{\bf Estimating $\|G_{j, \beta}\|_p$, for $|\beta|>2\pi$:} 
In this case, the fact that $f$ is band limited, $\mathrm{supp}\,\hat{f}\subset B(0,(\pi+\epsilon)/h)$, 
allows us to write
$$
\widehat{G_{j,\beta}}(\xi)
= \xi^{j}\widehat{f}(\xi - \frac{\beta}{h})m_h(\xi) 
=  \widehat{f}(\xi - \frac{\beta}{h})  \xi^{j} \varphi\bigl(h ( \xi - \frac{\beta}{h}) \bigr)m_h(\xi).
$$
The norm of the multiplier  $\tau_1:=\tau_{1,j,\beta}(\xi):=\xi^{j} \varphi\bigl(h ( \xi - \frac{\beta}{h}) \bigr)m_h(\xi)$ can be estimated by $\|\tau_1\|_1$, which we can estimate using the fact that
$$
\max\left(
  \int_{\reals} 
   \left |\frac{d^2\widehat{g}}{d\xi^2}  (\xi)\right| 
  \dif \xi,
   \int_{\reals} 
   \left |\widehat{g}  (\xi)\right| 
  \dif \xi
 \right)
\le 
K 
\quad \Longrightarrow \quad
|g(x)| 
\le 
\frac{K}{(1+|x|)^{2}} 
\quad \Longrightarrow \quad
\|g\|_1 \le CK.$$
By (\ref{first_final_estimate}) of Lemma \ref{mu_L1}, we have the following.
%%%%%%%%%%%
%
%%%%%%%%%%%
\begin{claim}\label{tau1}
For $\beta\in 2\pi\ints^n$, $|\beta|>2\pi$,
$\|\tau_{1,j,\beta}\|_1 \le (C/h) |\beta/h|^{(2+j)} \exp(-c|\beta|^2/h^2)$. 
\end{claim}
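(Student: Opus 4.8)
The plan is to bound $\|\tau_{1,j,\beta}\|_1$ directly, exploiting that the cutoff confines the $\xi$-integration to a short interval on which $m_h$ is exponentially small. Since $m_h>0$ and $0\le\varphi\le1$, and since $\varphi\bigl(h(\,\cdot-\beta/h)\bigr)$ is supported where $|\xi-\beta/h|\le(\pi+2\epsilon)/h$, we have
\[
\|\tau_{1,j,\beta}\|_1=\int_{\reals}|\xi|^j\,\varphi\bigl(h(\xi-\beta/h)\bigr)\,m_h(\xi)\dif\xi\le\int_{|\xi-\beta/h|\le(\pi+2\epsilon)/h}|\xi|^j\,m_h(\xi)\dif\xi.
\]
Writing $\beta=2\pi\ell$ with $|\ell|\ge2$, this support is an interval of length $2(\pi+2\epsilon)/h\le C/h$ centred at $\beta/h$; since $|\beta|\ge4\pi$ dominates $\pi+2\epsilon$, every $\xi$ in it satisfies $|\xi|\le 2|\beta/h|$, so $|\xi|^j\le 2^j|\beta/h|^j$.

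The crux is a pointwise upper bound for $m_h$ on this interval, read off from \eqref{mult_nef}. For the numerator, $|\xi|\ge(|\beta|-\pi-2\epsilon)/h$ gives $\exp(-\xi^2/4)\le\exp\bigl(-(|\beta|-\pi-2\epsilon)^2/(4h^2)\bigr)$. For the denominator I keep only the single Gaussian indexed by $k=\ell$, whose centre $2\pi\ell/h=\beta/h$ lies within $(\pi+2\epsilon)/h$ of $\xi$; this yields $\sum_{k}\exp\bigl(-(\xi-2\pi k/h)^2/4\bigr)\ge\exp\bigl(-(\pi+2\epsilon)^2/(4h^2)\bigr)$. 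Dividing,
\[
m_h(\xi)\le\exp\!\left(\frac{(\pi+2\epsilon)^2-(|\beta|-\pi-2\epsilon)^2}{4h^2}\right)\qquad\text{on the support.}
\]

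The main obstacle is the exponential bookkeeping: one must verify the exponent is $\le-c|\beta|^2$ for some $c>0$. Setting $a:=\pi+2\epsilon$ and $b:=|\beta|$, the numerator of the exponent equals $b(2a-b)$, and an elementary computation shows that for every $b\ge4\pi$ (the smallest value permitted by $|\beta|>2\pi$ and $\beta\in2\pi\ints$) one has $b(b-2a)\ge 4c\,b^2$ with $c=\tfrac18-\tfrac{\epsilon}{4\pi}>0$, provided $\epsilon<\pi/2$ so that $a<2\pi$; indeed the required inequality $b\ge 2a/(1-4c)$ reduces to $b\ge4\pi$ for this choice of $c$. Hence $m_h(\xi)\le\exp(-c|\beta|^2/h^2)$ on the support. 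Combining the interval length $C/h$, the factor $2^j|\beta/h|^j$, and this pointwise bound gives $\|\tau_{1,j,\beta}\|_1\le(C/h)|\beta/h|^j\exp(-c|\beta|^2/h^2)$; since $|\beta/h|\ge4\pi\ge1$ for $h\le1$, the power $|\beta/h|^j$ is dominated by $|\beta/h|^{2+j}$, which yields the stated estimate. The multivariate statement then follows from the tensor factorization $m_h^{[n]}=\prod_i m_h^{[1]}(\xi_i)$ together with the analogous factorization of the cutoff, the product of the univariate Gaussians supplying the full exponential $\exp(-c|\beta|^2/h^2)$.
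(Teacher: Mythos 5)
There is a genuine gap, and it concerns what the quantity $\|\tau_{1,j,\beta}\|_1$ in the claim actually is. You have estimated the $L_1$ norm of the \emph{symbol}, $\int_{\reals}|\tau_{1,j,\beta}(\xi)|\dif\xi$. But the quantity the claim must control --- and the only way the claim is used --- is the $L_1$ norm of the \emph{inverse Fourier transform} $(\tau_{1,j,\beta})^{\vee}$, i.e.\ of the convolution kernel. This is visible in two places: the claim feeds into \eqref{>2pi} through Young's inequality,
\[
\|G_{j,\beta}\|_p=\bigl\|(\tau_1)^{\vee}*\bigl(\widehat f(\cdot-\beta/h)\bigr)^{\vee}\bigr\|_p
\le\bigl\|(\tau_1)^{\vee}\bigr\|_1\,\bigl\|\bigl(\widehat f(\cdot-\beta/h)\bigr)^{\vee}\bigr\|_p,
\]
which requires the kernel, not the symbol, to be integrable; and the multivariate counterpart \eqref{Tau_1} writes $\|(\sig_1)^{\vee}\|_1$ explicitly. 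A bound on $\int|\tau_1(\xi)|\dif\xi$ yields only $\|(\tau_1)^{\vee}\|_\infty\le C\int|\tau_1(\xi)|\dif\xi$, a uniform bound on the kernel with no decay in $x$; it gives no $L_1$ bound on the kernel and does not control the $\mult{p}$ norm for $p\neq 2$. The fact that you obtained the power $|\beta/h|^{j}$ rather than the claimed $|\beta/h|^{2+j}$ is a symptom that you were bounding an easier quantity than the one intended.

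What is missing is exactly the mechanism the paper states just before the claim: if \emph{both} $\int|\tau_1(\xi)|\dif\xi\le K$ and $\int|\tau_1''(\xi)|\dif\xi\le K$, then $|(\tau_1)^{\vee}(x)|\le CK(1+|x|)^{-2}$, whence $\|(\tau_1)^{\vee}\|_1\le CK$. Your computation is precisely the first of these two ingredients, and it is correct --- the support bookkeeping and the exponent arithmetic ($a^2-(b-a)^2=-b(b-2a)\le -4cb^2$ for $b\ge 4\pi$ with $c=\tfrac18-\tfrac{\epsilon}{4\pi}$) check out, and essentially the same pointwise comparison of the numerator Gaussian against the single denominator Gaussian centered at $\beta/h$ is what drives the paper's own argument. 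The substantive missing ingredient is the second-derivative estimate \eqref{first_final_estimate} of Lemma \ref{mu_L1}: differentiating $\xi^{j}\varphi\bigl(h(\xi-\beta/h)\bigr)m_h(\xi)$ twice produces the term $\xi^{j}\varphi\bigl(h(\xi-\beta/h)\bigr)m_h''(\xi)$, and controlling $m_h''$ on the support is what Lemma \ref{mult_second_neriv} supplies, namely $|m_h''(\xi)|\le C(\tilde k/h)^2\,m_h(\xi)$; this is where the extra factor $(|\beta|/h)^{2}$, i.e.\ the exponent $2+j$, comes from. The repair is therefore to run your support-plus-pointwise argument on $\tau_1''$ as well (invoking Lemma \ref{mult_second_neriv}, and noting that each derivative landing on the cutoff only brings down a harmless factor $h$), and then pass from the two symbol-side integrals to $\|(\tau_1)^{\vee}\|_1$ via the displayed implication. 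Your closing remark about tensorization is also beside the point here: $\tau_{1,j,\beta}$ is a univariate factor, and the product structure is handled later in \eqref{Tau_1}, factor by factor.
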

%%%%%%%%%%%%
%
%%%%%%%%%%%%
Therefore,
\begin{equation}\label{>2pi}
\|G_{j,\beta}\|_p \le 
\|\tau_1\|_1 \left\|\left(\widehat{f}(\xi - \frac{\beta}{h})\right)^{\vee}\right\|_p
\le
\left(\frac{C}{h}\right) \left|\frac{\beta}{h}\right|^{(2+j)} \exp\left(-c\frac{|\beta|^2}{h^2}\right)\|f\|_p.
\end{equation}

%%%%%%%%%%%%%%%%%%%%%%%%%%%%%%%%%%%%%%%%%
{\bf Estimating $\|G_{j, \beta}\|_p$, for $|\beta|=2\pi$:} 
This remaining case is quite similar to the previous one. 
We again use the fact that $f$ is band limited, although we need to
exercise extra caution since  the cutoff $\varphi(h(\xi- \beta/h))$ 
overlaps a narrow  region (near to $\beta/2h$) 
where $m_h(\xi)$ is close to $1$ and
$|\xi^{j}|$ is very large. 

When
$\beta=2\pi$ we write 
$ \varphi(t) = \omega(t) + \upsilon(t +\pi)$, with $\omega$ having
support on the (non-symmetric interval) $[-\pi +\epsilon, \pi+2\epsilon]$ and
equaling $1$ on $[-\pi +2\epsilon, \pi +\epsilon]$. (When $\beta =-2\pi$ an obvious
modification  $\varphi(t) = \tilde{\omega}(t) + \upsilon(t +\pi)$ is made.)
This allows us to write
\begin{eqnarray}
\widehat{G_{j,\beta}}(\xi)
&=&  \widehat{f}(\xi - \frac{\beta}{h})  \xi^{j}  \upsilon\bigl(h (\xi- \frac{\pi}{h}) \bigr)m_h(\xi)
+
\widehat{f}(\xi - \frac{\beta}{h})  \xi^{j}  \omega(h(\xi-2\pi/h)) m_h(\xi)\nonumber\\
&=: & \widehat{f}(\xi - \frac{\beta}{h})  \tau_{2,j,\beta}(\xi)
+
\widehat{f}(\xi - \frac{\beta}{h})  \tau_{3,j,\beta}(\xi).\label{tau2and3}
\end{eqnarray}

We first investigate $\tau_2=\tau_{2,j,\beta}$ by rewriting the monomial 
 $\xi^{j}$ as a Taylor series about $2\pi/h$, obtaining
 %%%%%%%%%%%%%%%
\begin{multline*} 
 \tau_2(\xi)=\xi^{j} \upsilon\bigl(h (\xi - \frac{\pi}{h}) \bigr)m_h(\xi) 
 = 
\sum_{\ell=0}^{j}  \binom{j}{\ell} \left(\frac{2\pi}{h}\right)^{j-\ell}
\left( \xi-\frac{2\pi}{h}\right)^{\ell}
\upsilon\bigl(h ( \xi - \frac{\pi}{h}) \bigr)m_h(\xi) \\
=
\left[\left( \xi-\frac{2\pi}{h}\right)^{j} \right]
\times 
m_h(\xi)
\times 
\left[\sum_{\ell=0}^{j} \binom{j}{\ell} \left(\frac{2\pi}{h}\right)^{j-\ell}
\frac{\upsilon\bigl(h ( \xi - \frac{\pi}{h}) \bigr)}{\left( \xi-\frac{2\pi}{h}\right)^{j-\ell}}\right].
\end{multline*}
The multiplier norm of
$\mu(\xi):= \sum_{\ell=0}^j \binom{j}{\ell} \left(\frac{2\pi}{h}\right)^{j-\ell}\frac{\upsilon\bigl(h ( \xi - \frac{\pi}{h}) \bigr)}{\left( \xi-\frac{2\pi}{h}\right)^{j-\ell}}
=
 \sum_{\ell=0}^j \binom{j}{\ell} \left({2\pi}\right)^{j-\ell}\frac{\upsilon\bigl(h  \xi - {\pi} \bigr)}{\left( h\xi- {2\pi}\right)^{j-\ell}}$ 
(which is a Schwarz function, since the support of $\upsilon\bigl(\cdot - {\pi} \bigr)$ is positive distance
from $2\pi$)
is uniformly bounded (in $h$) by the constant
$$\mathcal{K}_j := \sum_{\ell=0}^j \binom{j}{\ell} \left({2\pi}\right)^{j-\ell} 
\left\|\left(      \frac{\upsilon\bigl(  \cdot - {\pi}) \bigr)}{\left( \cdot - {2\pi}\right)^{j-\ell}}      \right)^{\vee}\right\|_1.$$
Thus we have shown the following:
%%%%%%%%%%%%%%%%%%%%
\begin{claim}\label{tau2}
The multiplier $\tau_{2,j,\beta}(\xi)$ can be written $\tau_{2,j,\beta}(\xi) = (\xi-\beta/h)^j \times m_h(\xi)\times \mu(\xi),$
where  $\mu$ has multiplier norm $\|\mu\|_{\mult{p}}\le \mathcal{K}_j$, bounded independent of $h$.
\end{claim}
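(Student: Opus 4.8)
The plan is to exploit the factorization already displayed, in which $\tau_{2,j,\beta}$ is written as $(\xi-\beta/h)^j \cdot m_h(\xi) \cdot \mu(\xi)$; the entire content of the claim then reduces to bounding the multiplier norm of the single factor $\mu$ uniformly in $h$. The key structural observation is that $\mu$ is a pure dilate of a fixed, $h$-independent function. Performing the substitution $\eta = h\xi$ in each summand shows that the factor $(2\pi/h)^{j-\ell}$ cancels exactly against the $h^{j-\ell}$ produced by rewriting $(\xi-2\pi/h)^{-(j-\ell)} = h^{j-\ell}(h\xi-2\pi)^{-(j-\ell)}$, leaving
$$
\mu(\xi) = M(h\xi), \qquad
M(\eta) := \sum_{\ell=0}^j \binom{j}{\ell}(2\pi)^{j-\ell}\frac{\upsilon(\eta-\pi)}{(\eta-2\pi)^{j-\ell}},
$$
with $M$ manifestly independent of $h$.

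From here the argument is two short steps. First, since each summand of $M$ is a compactly supported $C^\infty$ function (see below), $M$ is Schwartz and its inverse transform $M^\vee$ lies in $L_1$; the kernel $\mu^\vee$ is then merely an $L_1$-norm-preserving dilate of $M^\vee$, so $\|\mu^\vee\|_1 = \|M^\vee\|_1$, a quantity independent of $h$. Second, Young's inequality gives $\|\mu\|_{\mult{p}} \le \|\mu^\vee\|_1$ for every $1 \le p \le \infty$, and applying the triangle inequality term-by-term to the sum defining $M^\vee$ yields exactly the constant $\mathcal{K}_j = \sum_{\ell=0}^j \binom{j}{\ell}(2\pi)^{j-\ell}\bigl\|(\upsilon(\cdot-\pi)/(\cdot-2\pi)^{j-\ell})^\vee\bigr\|_1$.

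The one point requiring care --- and the place where the hypotheses on the bump $\upsilon$ are used --- is justifying that each factor $\upsilon(\cdot-\pi)/(\cdot-2\pi)^{j-\ell}$ is genuinely Schwartz. Here I would check that $\epsilon$ is small enough that the support $[\pi-2\epsilon,\pi+2\epsilon]$ of $\upsilon(\cdot-\pi)$ stays a positive distance from the singularity of the denominator at $2\pi$; then division by $(\cdot-2\pi)^{j-\ell}$ introduces no singularity on the support, and each factor is a compactly supported smooth function, hence Schwartz with integrable inverse transform. This is the only obstacle, and it is purely a matter of bookkeeping: once the $h$-cancellation exposes $\mu$ as a dilate of the fixed $M$, uniformity in $h$ is automatic, since dilation preserves the $L_1$ norm of the kernel.
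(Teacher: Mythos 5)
Your proposal is correct and takes essentially the same route as the paper: the paper performs the identical cancellation to exhibit $\mu(\xi)$ as a fixed, $h$-independent Schwartz function evaluated at $h\xi$ (each summand being smooth and compactly supported because $\mathrm{supp}\,\upsilon(\cdot-\pi)$ lies a distance $\pi-2\epsilon>0$ from the singularity at $2\pi$, using the standing assumption $\epsilon<\pi/2$), and then bounds $\|\mu\|_{\mult{p}}$ by the $L_1$ norm of the inverse transform, term by term, yielding exactly $\mathcal{K}_j$. The only difference is that you make explicit the dilation-invariance of the kernel's $L_1$ norm, which the paper leaves implicit in its assertion of uniformity in $h$.
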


The multiplier $\tau_{3,j,\beta}$ is controlled in a similar way to $\tau_1$.
The only modification is that we use estimate (\ref{second_final_estimate})
of Lemma \ref{mu_L1}, utilizing the fact that $\omega(\cdot - 2\pi)$ has support a positive distance (namely $\epsilon$)
from $\pi$. In particular, it satisfies condition (\ref{extra_condition}) Thus, we  obtain
%%%%%%%%%
%
%%%%%%%%%
\begin{equation*}
\|(\tau_{3})^{\vee}\|_1\le
 \left|\frac{C}{h}\right|^{{j}+3} e^{-|c|^2/h^2},
 \end{equation*}
 %%%%%%%%%%%%
 and we have demonstrated the following claim.
 \begin{claim}\label{tau3} Let $|\beta|=2\pi$.
 There is a constant $C$ depending only on $j$ for which the multiplier $\tau_{3,j,\beta}$ satisfies $$\|(\tau_{3,j,\beta})^{\vee}\|_1\le C.$$
\end{claim}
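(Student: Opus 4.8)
The plan is to follow the template established for $\tau_1$ in Claim \ref{tau1}, estimating the multiplier norm of $\tau_3=\tau_{3,j,\beta}(\xi)=\xi^{j}\omega\bigl(h(\xi-2\pi/h)\bigr)m_h(\xi)$ through its inverse Fourier transform, $\|\tau_3\|_{\mult{p}}\le\|(\tau_3)^{\vee}\|_1$, and bounding the latter by the decay mechanism recorded just before Claim \ref{tau1}: if $\int_{\reals}|\tau_3|\dif\xi$ and $\int_{\reals}|\tau_3''|\dif\xi$ are both at most $K$, then $|(\tau_3)^{\vee}(x)|\le K/(1+|x|)^2$ and hence $\|(\tau_3)^{\vee}\|_1\le CK$. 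So the entire task reduces to producing a single constant $K$ that simultaneously controls $\int|\tau_3|$ and $\int|\tau_3''|$.

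The one genuine change from the $\tau_1$ computation is which branch of Lemma \ref{mu_L1} to invoke. By construction $\omega(\cdot-2\pi)$ is supported in $[\pi+\epsilon,3\pi+2\epsilon]$, so in the rescaled variable the cutoff $\omega(h(\xi-2\pi/h))=\omega(h\xi-2\pi)$ is supported where $h\xi\ge\pi+\epsilon$, i.e.\ a fixed distance $\epsilon$ from the value $h\xi=\pi$. That value corresponds to $\xi=\pi/h$, the midpoint between the spectral bumps at $0$ and $2\pi/h$ where $m_h$ transitions and stays of order $1$; away from it the numerator $e^{-|\xi|^2/4}$ of $m_h$ forces Gaussian smallness, of size $e^{-c/h^2}$ uniformly on the support. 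This is exactly the hypothesis (\ref{extra_condition}) required for estimate (\ref{second_final_estimate}) of Lemma \ref{mu_L1}, so I would apply that estimate in place of (\ref{first_final_estimate}).

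With the lemma in hand the assembly is routine bookkeeping of powers of $h$. Differentiating the product $\xi^{j}\,\omega(h\xi-2\pi)\,m_h(\xi)$ twice, each derivative falling on $\omega(h\cdot)$ contributes a factor $h$, the prefactor $\xi^{j}$ is of size $(1/h)^{j}$ on the support, and the derivatives of $m_h$ are controlled on this interval by Lemma \ref{mult_second_neriv}, which trades each derivative for a factor $1/h$ against $m_h(\xi)$ itself. Collecting the worst case gives growth polynomial in $1/h$, namely $|C/h|^{j+3}$, multiplied by the Gaussian factor $e^{-c/h^2}$ coming from the smallness of $m_h$ on the support. Thus $K\le|C/h|^{j+3}e^{-c/h^2}$, and since this quantity is bounded (indeed tends to $0$) as $h\to0$, one obtains $\|(\tau_3)^{\vee}\|_1\le|C/h|^{j+3}e^{-c/h^2}\le C$, with $C$ depending only on $j$.

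The main obstacle is precisely the role of the gap $\epsilon$: the argument succeeds only because the Gaussian factor $e^{-c/h^2}$ overwhelms every inverse power of $h$, and this Gaussian factor is available solely because $\omega(\cdot-2\pi)$ keeps its support a positive distance from the transition point $\pi$. Had the support reached $h\xi=\pi$ we would be in the regime where $m_h\approx\tfrac12$ and $|\xi^{j}|\approx(\pi/h)^{j}$ simultaneously, and no such decay could be extracted; that borderline piece is exactly what was separated off as $\tau_2$ and tamed by the Taylor expansion in Claim \ref{tau2}.
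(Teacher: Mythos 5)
Your proposal is correct and takes essentially the same route as the paper: the paper likewise controls $\tau_{3,j,\beta}$ by the template used for $\tau_1$, substituting estimate (\ref{second_final_estimate}) of Lemma \ref{mu_L1} for (\ref{first_final_estimate}) on the grounds that $\omega(\cdot-2\pi)$ keeps its support a distance $\epsilon$ from $\pi$ and hence satisfies condition (\ref{extra_condition}), arriving at the same bound $\left|C/h\right|^{j+3}e^{-c/h^2}\le C$. Your closing remark on why the gap $\epsilon$ is indispensable matches the paper's own motivation for splitting off $\tau_2$.
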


For $|\beta|=2\pi$, applying the Claims \ref{tau2} and \ref{tau3} to (\ref{tau2and3}), it follows that  
\begin{eqnarray}
\|G_{j,\beta}\|_p&\le& \|(\tau_2 \widehat{f}(\cdot-\beta/h))^{\vee}\|_p + 
\|\tau_3 \widehat{f}(\cdot-\beta/h))^{\vee}\|_p\nonumber\\
&\le & 
\mathcal{K}_j \|m_h\|_{\mult{p}}
 \left\| \left( (\cdot - \beta/h)^j \widehat{f}(\cdot-\beta/h)\right)^{\vee}\right\|_p + 
C\left\|\left(\widehat{f}(\cdot-\beta/h)\right)^{\vee}\right\|_p\nonumber\\
&\le &
\mathcal{K}_j  \|m_h\|_{\mult{p}} \| f\|_{W_p^j} + 
C\|f\|_p\label{=2pi}
\end{eqnarray}
Summing $\|G_{j,\beta}\|_p$ over $0\le j\le k$ and $\beta \in 2\pi
\ints$, and employing the estimates (\ref{=0}), (\ref{>2pi}) and
(\ref{=2pi}), we observe that $\|I_h f\|_{W_p^k} \le C(1 +
\|m_h\|_{\mult{p}})\|f\|_{W_p^k}$ for $f\in \pw((\pi+\epsilon)/h)$.

We now give the general, multivariate result.
%%%%%%%%%%%%%%%%%%%%%%%%%%%%%%%%%%%%%%%%%
%:StableIntLemma
%%%%%%%%%%%%%%%%%%%%%%%%%%%%%%%%%%%%%%%%%%
\begin{lemma}\label{StableIntLemma}
  Let $0<\epsilon<\pi/2$.  Cardinal interpolation by Gaussians on
  $h{\ints^n}$ restricted to band-limited functions in
  $\pw((\pi+\epsilon)/h)$, satisfies the Stable Interpolation
  Property, with
\begin{itemize}
\item $Q_p(h)\le C_p$, a constant depending only on $\epsilon,$
  $p$, $n$ and $k$ when $1<p<\infty$,
\item $Q_p(h)\le C(1+|\log h|)^n$, a constant depending only on
  $\epsilon,$ $n$ and $k$ when $p=1,\infty$,
\end{itemize}
In other words,
$$
|I_h f|_{W_p^k} \le Q_p(h) \|f\|_{W_p^k}\qquad \text{for}\ f\in
\wp(\frac{\pi+\epsilon}{h})
$$
\end{lemma}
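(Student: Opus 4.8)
The plan is to reduce the $n$-dimensional estimate to the univariate estimates already established, exploiting the tensor-product structure $m_h^{[n]}(\xi) = \prod_{i=1}^n m_h(\xi_i)$ together with the fact that the periodized sum in (\ref{IntFT}) runs over the product lattice $2\pi\ints^n$. Concretely, for each multi-index $\alpha$ with $|\alpha|\le k$ I would use (\ref{IntFT}) to write
$$
\widehat{D^\alpha I_h f}(\xi) = (i\xi)^\alpha m_h^{[n]}(\xi)\sum_{\beta\in 2\pi\ints^n}\widehat f(\xi-\beta/h) = \sum_{\beta\in 2\pi\ints^n}\widehat{G_{\alpha,\beta}}(\xi),
$$
so that $|I_h f|_{W_p^k}\le \sum_{|\alpha|\le k}\sum_{\beta}\|G_{\alpha,\beta}\|_p$, and estimate each $\|G_{\alpha,\beta}\|_p$ separately, exactly as in the univariate warm-up preceding the lemma.

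The essential step is to factor each $G_{\alpha,\beta}$ into univariate pieces. Since $f\in\pw((\pi+\epsilon)/h)$, the support of $\widehat f(\cdot-\beta/h)$ lies in the box $\prod_i[\beta_i/h-(\pi+\epsilon)/h,\ \beta_i/h+(\pi+\epsilon)/h]$, so I may insert the product cutoff $\prod_i\varphi(h(\xi_i-\beta_i/h))$ without changing $G_{\alpha,\beta}$ and then write
$$
\xi^\alpha m_h^{[n]}(\xi)\prod_i\varphi(h(\xi_i-\beta_i/h)) = \prod_{i=1}^n\bigl[\xi_i^{\alpha_i}\varphi(h(\xi_i-\beta_i/h))\,m_h(\xi_i)\bigr].
$$
Each factor is exactly one of the univariate multipliers $\tau_1,\tau_2,\tau_3$ analyzed above, the choice dictated by whether $\beta_i=0$, $|\beta_i|=2\pi$, or $|\beta_i|>2\pi$. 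Partitioning the coordinates accordingly into $S_0,S_1,S_2$, I would move the monomial factors $\xi_i^{\alpha_i}$ (for $i\in S_0$) and $(\xi_i-\beta_i/h)^{\alpha_i}$ (for $i\in S_1$, after the Taylor expansion of Claim \ref{tau2}) onto $f$ as genuine derivatives—modulations leave $L_p$ norms unchanged—while for $i\in S_2$ the monomial stays inside the exponentially small $\tau_1$ estimate of Claim \ref{tau1}. Since the multiplier norm of a tensor product of univariate multipliers acting on distinct variables is the product of the univariate multiplier norms, this yields
$$
\|G_{\alpha,\beta}\|_p \lesssim (1+\|m_h\|_{\mult p})^{|S_0|+|S_1|}\prod_{i\in S_2}\Bigl[(C/h)\,|\beta_i/h|^{2+\alpha_i}e^{-c|\beta_i|^2/h^2}\Bigr]\,\|f\|_{W_p^k},
$$
where the $(1+\|m_h\|_{\mult p})$ factors record the $\tau_2$/$\tau_3$ split of Claims \ref{tau2}–\ref{tau3} in the $S_1$ coordinates and the bound (\ref{=0}) in the $S_0$ coordinates.

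Summing first over $\beta$, the coordinate sums decouple: in each coordinate the $\beta_i=0$ and $|\beta_i|=2\pi$ contributions give finitely many terms of size $\lesssim 1+\|m_h\|_{\mult p}$, while the $|\beta_i|>2\pi$ contributions sum to a constant by the Gaussian decay. Hence $\sum_\beta\|G_{\alpha,\beta}\|_p\lesssim (1+\|m_h\|_{\mult p})^n\|f\|_{W_p^k}$, and summing over the finitely many $\alpha$ with $|\alpha|\le k$ preserves this form. Invoking Lemma \ref{multiplier_norm} then gives $(1+\|m_h\|_{\mult p})^n\le C_p$ for $1<p<\infty$ and $\le C(1+|\log h|)^n$ for $p=1,\infty$, which is exactly the claimed bound on $Q_p(h)$.

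The main obstacle I anticipate is the bookkeeping in the factorization step: distributing the single monomial $\xi^\alpha$ correctly across the three coordinate classes—turning it into honest derivatives of $f$ on $S_0\cup S_1$ while leaving it inside the decaying multiplier on $S_2$—and checking that multiplicativity of the multiplier norm really lets Claims \ref{tau1}–\ref{tau3} combine into a clean product without generating cross-terms. A secondary point needing care is that the $n$-dimensional band-limiting is to a Euclidean ball rather than a box; but since the ball is contained in the box the tensor-product cutoffs remain valid, so this causes no genuine difficulty.
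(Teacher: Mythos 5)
Your proposal is correct and takes essentially the same route as the paper's own proof: the same decomposition $\widehat{D^\alpha I_h f}=\sum_{\beta\in 2\pi\ints^n}\widehat{G_{\alpha,\beta}}$, the same partition of coordinates according to $\beta_i=0$, $|\beta_i|=2\pi$, $|\beta_i|>2\pi$ (the paper's $K$, $L$, $J$ subsequences), the same reduction to Claims \ref{tau1}--\ref{tau3} via multiplicativity of tensor-product multiplier norms, and the same summation over $\beta$ using Gaussian decay followed by Lemma \ref{multiplier_norm}. The only cosmetic difference is that you insert the cutoff $\varphi$ in every coordinate (harmless, since $\varphi(h\,\cdot)$ has multiplier norm bounded independently of $h$ and equals $1$ on the band), whereas the paper omits it in the $\beta_i=0$ coordinates.
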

%%%%%%%%%%%%%%%%%%%
%
%%%%%%%%%%%%%%%%%%%
\begin{proof}
We use (\ref{IntFT}) to write, for a multi-integer $|\alpha|\le k$,
%%%%%%%%%%%%
\begin{eqnarray*}
\widehat{D^{\alpha} I_h f}(\xi) &=& \xi^{\alpha} \left[ \sum_{\beta\in 2\pi\ints^n} \widehat{f}(\xi - \frac{\beta}{h})\right] m_h^{[n]}(\xi)
=
\sum_{\beta \in 2\pi \ints^n} \widehat{G_{\alpha,\beta}}(\xi) 
\end{eqnarray*}
where
$
G_{\alpha,\beta} 
:= 
D^{\alpha} \left[(f e^{i \langle \beta/h, \cdot \rangle})* (m_h^{[n]})^{\vee}\right].
$
%%%%%%%%%%%%
Clearly, 
$\|I_h f\|_{W_p^k} \le 
\sum_{|\alpha|\le k}\sum_{\beta}\|G_{\alpha,\beta}\|_p,
$ and the remainder of the section is concerned with estimating 
$\|G_{\alpha,\beta}\|_p$ for various $\beta$. 

%For the case $\beta=0$, this is accomplished fairly easily. 

We write $\widehat{G_{\beta,\alpha}} (\xi)= M_{\alpha}(\xi)\widehat{f}(\xi- \beta/h)$
employing the tensor product multiplier $M_{\alpha} (\xi)
:=
\xi^{\alpha} %\phi\bigl(h ( \xi - \frac{\beta}{h}) \bigr)
m_{h}^{[n]}(\xi) $.
We estimate these terms by observing that the support of $\widehat{f}(\xi-\beta/h)$,
which is a neighborhood of $\beta/h$, and therefore lies in a region where $\mu_{\alpha}$ is small
(for most values of $\beta \ne 0$). 

This heuristic is complicated by the fact that, for certain values of $\beta$, the neighborhood
of $\beta/h$ overlaps a region where $m_h^{[n]}$ is near to $1$ and $|\xi^{\alpha}|$ may be quite
large. Therefore, we must be somewhat  careful.

Fix $\beta = (\beta_1,\dots,\beta_n) \in 2\pi \ints^n$. Before proceeding, we partition 
$(1,\dots,n)$ into three subsequences 
$(J_1,\dots,J_{n_1})$, 
$(K_1,\dots,K_{n_2})$ and 
$(L_1,\dots,L_{n_3})$ , with $n_1+n_2+n_3=n$, and 
where   
\begin{itemize}
\item $(J_1,\dots, J_{n_1})$ are the indices $J$ where $|\beta_{J} |>2\pi$
\item $(K_1,\dots,K_{n_2}) = \supp{\beta}$ , 
\item $(L_1,\dots,L_{n_3})$ are the indices  where $|\beta_L|=2\pi.$
\end{itemize}
As an example, in dimension $n=5$, we have for $\beta = (-4\pi, -2\pi, 0, 0, 6\pi)$
that $(J_1,J_2) = (1,5)$, $(K_1,K_2)=(3,4)$ and $L_1= 2$.

Then, because the multiplier $m_h^{[n]}$ and the monomial $\xi^{\alpha}$ 
are tensor products and can be factored over the three
subsequences we have just constructed, and because because $f$ is band limited, we have 
%%%%%%%%%%%
\begin{multline*}
M_{\alpha} (\xi) \widehat{f}(\xi- \frac{\beta}{h})
=
\left[
\prod_{j=1}^{n_1}
(\xi_{J_j})^{\alpha_{J_j}}
 \varphi\bigl(h ( \xi_{J_j} - \frac{\beta_{J_j}}{h}) \bigr)
m_{h}(\xi_{J_j})\right]\\
\times
\left[
\prod_{j=1}^{n_2}
(\xi_{K_j})^{\alpha_{K_j}} %\varphi\bigl(h ( \xi_n - \frac{\beta_n}{h}) \bigr)
m_{h}(\xi_{K_j}) \right]
\\
\times
\left[
\prod_{j=1}^{n_3}
(\xi_{L_j})^{\alpha_{L_j}}
 \varphi\bigl(h ( \xi_{L_j} - \frac{\beta_{L_j}}{h}) \bigr)
m_{h}(\xi_{L_n})\right]
\times
\widehat{f}(\xi- \frac{\beta}{h})
\end{multline*}
%%%%%%%%%%%%%
In other words, 
$M_{\alpha} (\xi) \widehat{f}(\xi- \frac{\beta}{h})$ can be written as a product of tensor product multipliers applied to $\widehat{f}(\xi- \frac{\beta}{h})$, namely
$$
M_{\alpha} (\xi) \widehat{f}(\xi- \frac{\beta}{h})=
[\sig_1(\xi)]
\times
\left[\prod_{j=1}^{n_2}(\xi_{K_j})^{\alpha_{K_j}} 
m_{h}(\xi_{K_j})\right]
\times
[\sig_2(\xi)]\times  \widehat{f}(\xi- \frac{\beta}{h}),$$
We have written
$\sig_1(\xi):= \sig_{1,\alpha,\beta}(\xi)
:=\prod_{j=1}^{n_1}
(\xi_{J_j})^{\alpha_{J_j}}
 \varphi\bigl(h ( \xi_{J_j} - \frac{\beta_{J_j}}{h}) \bigr)
m_{h}(\xi_{J_j})
=\prod_{j=1}^{n_1}\tau_1(\xi_{J_j})
$.
In a similar way, we identify the factor %of the tensor product multiplier $M_{\alpha}$ 
where $\beta = 0$ as
$\prod_{j=1}^{n_2}
(\xi_{K_j})^{\alpha_{K_j}} %\varphi\bigl(h ( \xi_n - \frac{\beta_n}{h}) \bigr)
m_{h}(\xi_{K_j})
$
and the factor
where $|\beta|=2\pi$ as
$\sig_2(\xi):= \sig_{2,\alpha,\beta}(\xi):=\prod_{j=1}^{n_3}
(\xi_{L_j})^{\alpha_{L_j}}
 \varphi\bigl(h ( \xi_{L_j} - \frac{\beta_{L_j}}{h}) \bigr)
m_{h}(\xi_{L_j}).
$
It follows that 
\begin{eqnarray}
\|G_{\alpha,\beta}\|_p 
&=&
 \left\| \left(M_{\alpha}\ \widehat{f}(\cdot- \beta/h)\right)^{\vee}\right\|_p \nonumber\\
&\le& 
\|\sig_{1,\alpha,\beta}\|_{\mult{p}}
\times \|m_h\|_{\mult{p}}^{n_2} 
\times 
\left \| 
%D^{\alpha_{K}}
\left(
%\left[\prod_{j=1}^{n_2}\xi_{K_j}^{\alpha_{K_j}}\right]
  \sig_{2,\alpha,\beta} 
\times (\cdot)^{\alpha_K}\widehat{f}(\cdot - \beta/h) \right)^{\vee}\right \|_{L_p}.\label{G_est}
\end{eqnarray} 
where we write $(\cdot)^{\alpha_K}:\xi\mapsto \xi^{\alpha_{K}}  := \prod_{j=1}^{n_2}\xi^{\alpha_{K_j}}.$
%where we define the
%univariate multiplier $\tau_{\alpha,\beta} $ by
%$\tau_{\alpha,\beta} (\xi):= 
%\xi^{\alpha}
% \varphi\bigl(h ( \xi - \frac{\beta}{h}) \bigr)
%m_{h}(\xi)$.
%In what follows, we estimate its $L_p$ multiplier norm by the $L_1$ norm
%of its Fourier inverse.

%%%%%%%%%%%
%:Far off-center estimate
%%%%%%%%%%%
{\bf Estimating $\|(\sig_{1,\alpha,\beta})^{\vee}\|_{\mult{p}}$:}
%To estimate the $L_1$ norm of $\tau_{\beta,\alpha}$, we use the fact 
%it is a tensor product of univariate functions, having Fourier transform
%$\bigl(\xi^{\alpha_1} \varphi\bigl(h ( \xi_1 - \frac{\beta_1}{h}) \bigr)m_{h}(\xi_1)\bigr)\times
%\cdots\times
%\bigl(\xi^{\alpha_n} \varphi\bigl(h ( \xi_n - \frac{\beta_n}{h}) \bigr)m_{h}(\xi_n)\bigr).$
We use the rough estimate
 $\|(\sig_{1,\alpha,\beta})^{\vee}\|_{\mult{p}}\le \|(\sig_{1,\alpha,\beta})^{\vee}\|_1$. The $L_1$ norm of each univariate factor 
$ \tau_{1}(\xi_{J_j}):=
(\xi_{J_j})^{\alpha_{J_j}}
 \varphi\bigl(h ( \xi_{J_j} - \frac{\beta_{J_j}}{h}) \bigr)
m_{h}(\xi_{J_j})$ is bounded by Claim \ref{tau1}, from which we obtain
\begin{equation}\label{Tau_1}\|
(\sig_{1})^{\vee}\|_1\le
 \frac{C}{h^{n_1}} \prod_{j=1}^{n_1}\left|\frac{\beta_{J_j}}{h}\right|^{\alpha_{J_j}+2}e^{-c|\beta_{J_j}|^2/h^2}.
 \end{equation}
%%%%%%%%%%%
%:Near off-center estimate
%%%%%%%%%%%
{\bf Estimating 
$\left \| 
\left(
  \sig_{2,\alpha,\beta} 
\times (\cdot)^{\alpha_K} \widehat{f}(\cdot - \beta/h) \right)^{\vee}\right \|_{L_p}$:}
 There is an immediate decomposition of 
 $
  \sig_2(\xi)$
into
$$
\prod_{j=1}^{n_3}
\left( 
\tau_2(\xi_{L_j})+\tau_3(\xi_{L_j})
%(\xi_{L_j})^{\alpha_{L_j}}
% \upsilon\bigl(h ( \xi_{L_j} - \frac{\beta_{L_j}}{2h}) \bigr)
%m_{h}(\xi_{L_j})
% +
%(\xi_{L_j})^{\alpha_{L_j}}
% \omega\bigl(h ( \xi_{L_j} - \frac{\beta_{L_j}}{h}) \bigr)
%m_{h}(\xi_{L_j}
\right). 
$$
Thus Claims \ref{tau2} and \ref{tau3}, and the fact that
$\left\|\left( \prod_{j=1}^{n_3} (\xi_{L_j} - \beta_{L_j})^{\alpha_{L_j}} \times \xi^{\alpha_{K}}\times \widehat{f}(\xi - \beta/h)\right)^{\vee}\right\|_p\le \|f\|_{W_p^k}$
 imply that
\begin{equation}\label{Tau_3_split}
\left \| 
\left(
  \sig_{2} 
\times (\cdot)^{\alpha_{K}}\widehat{f}(\cdot - \beta/h) \right)^{\vee}\right \|_{L_p}
\le
C\left[\prod_{j=1}^{n_3}
(1+\|m_h\|_{\mult{p}})
\right]
\left \|  f \right \|_{W_p^k}.
\end{equation}
with constant $C$ depending on $n$ and $\alpha$.

Applying  estimates (\ref{Tau_1})  and (\ref{Tau_3_split}), which control $\|(\sig_{1,\alpha,\beta})^{\vee}\|_{\mult{p}}$, and 
 $\left \| \left(
  \sig_{2,\alpha,\beta} 
\times (\cdot)^{\alpha_K} \widehat{f}(\cdot - \beta/h) \right)^{\vee}\right \|_{L_p}$, respectively,
to  (\ref{G_est}),  we bound the sum of the $\|G_{\alpha,\beta}\|_p$'s:
\begin{equation}
\label{far-off-center}
\sum_{\beta\in 2\pi \ints^n } \|G_{\alpha,\beta}\|_p
\le
C(1+\|m_h\|_{\mult{p}})^n \|f\|_{W_p^k}
\left[\prod_{j=1}^n
\left(
3+ 2\sum_{\ell = 2}^{\infty}
 \left|\frac{2\pi \ell}{h}\right|^{\alpha_j+2}% 
e^{-c\left|\frac{ 2\pi\ell}{h}\right|^2}  
\right) 
\right]
%\le
%C \|m_h\|_{\mult{p}}^n\|f\|_p,
\end{equation}
and the Lemma follows from Lemma \ref{multiplier_norm}.

% $\sum_{\gamma\le \alpha} C_{\gamma} (\beta/h)^{\alpha-\gamma}( \xi-\beta/h)^{\gamma}$.
\end{proof}
%%%%%%%%%%%%%%%
%End of Proof
%%%%%%%%%%%%%%%
%%%%%%%%%%%%%%%%%%%%%%%%%%%%%
%:Lemma mu_L1
%%%%%%%%%%%%%%%%%%%%%%%%%%%%
\begin{lemma}\label{mu_L1}
Suppose that  $\epsilon>0$ and that $\phi$ is a $C^{\infty}$ function with support in 
$B(0,\pi+\epsilon)$.
%Let $\pi<b<2\pi$ and let $\beta\in 2\pi \ints^n$, 
If $\beta \in 2\pi \ints$, $\beta>2\pi$ and $0<h<1$
%Suppose $\phi\in \mathcal{S}$ is band limited with $\supp \phi \subset B(0,b)$. 
then there exist constants $c,C>0$, depending only on $\epsilon$ and $k$  
so that
\begin{equation}\label{first_final_estimate}
\int_{\reals}
  \left|
    \frac{d^2}{d\xi^2}\left[
      \xi^{k} 
      \phi \bigl(h ( \xi - \frac{\beta}{h})\bigr)
      m_{h}(\xi)
    \right] 
  \right|
\dif \xi 
\le  
C h^{-1}
  \left(\frac{|\beta|}{h}
\right)^{2+k} 
\exp{\left(-c\left|\frac{\beta}{h}\right|^2\right)}.
\end{equation}
For $\beta \in 2\pi \ints$, $|\beta|=2\pi$ and  $\phi$ satisfying the extra condition
\begin{equation}\label{extra_condition}
\supp{\phi(\cdot-\beta)}\cap B(0, \pi+\epsilon) = \emptyset,
\end{equation}
there exist constants $c,C>0$, depending only on $\epsilon$ and $k$  
so that for $0<h<1$
\begin{equation}\label{second_final_estimate}
\int_{\reals}
  \left|
    \frac{d^2}{d\xi^2}\left[
      \xi^{k} 
      \phi\bigl(h ( \xi - \frac{\beta}{h})\bigr)
      m_{h}(\xi)
    \right] 
  \right|
\dif \xi 
\le  
C h^{-(3+k)} \exp{\left(-\left|\frac{c}{h}\right|^2\right)}.
\end{equation}
\end{lemma}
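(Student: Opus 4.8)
The plan is to exploit that the cutoff $\phi(h(\xi-\beta/h)) = \phi(h\xi-\beta)$ confines the integrand to the window $I_\beta := \{\xi : |h\xi-\beta|\le \pi+\epsilon\}$ of length $2(\pi+\epsilon)/h$ centered at $\beta/h$. On such a short interval I would bound the $L_1$ norm of the second derivative by $|I_\beta|$ times the $L_\infty$ norm of the integrand; since $|I_\beta|\sim h^{-1}$, the whole problem reduces to a pointwise estimate of $\frac{d^2}{d\xi^2}[\xi^{k}\phi(h\xi-\beta)m_h(\xi)]$ on $I_\beta$.

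To obtain that pointwise bound I would expand the second derivative by the Leibniz rule into the six terms built from $\{\xi^{k},(\xi^{k})',(\xi^{k})''\}$, from $\{\phi, h\phi', h^2\phi''\}$ (the chain rule produces the $h$-powers, all $\le 1$ since $h\le 1$), and from $\{m_h, m_h', m_h''\}$. On $I_\beta$ the monomial and its derivatives satisfy $|\xi|^{k}\lesssim (|\beta|/h)^{k}$. For the multiplier factors I would invoke Lemma \ref{mult_second_neriv}, $|m_h''(\xi)|\le C(\tilde k/h)^2 m_h(\xi)$, together with the analogous and simpler first-derivative bound $|m_h'(\xi)|\le C(\tilde k/h) m_h(\xi)$, which follows from the same argument after writing $m_h'/m_h = -\frac{\pi}{h}\,\frac{\sum_k k\, e^{-|\xi-2\pi k/h|^2/4}}{\sum_k e^{-|\xi-2\pi k/h|^2/4}}$ and bounding the ratio by $C\tilde k$ on the interval of index $\tilde k$. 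Every Leibniz term is then dominated by $C(|\beta|/h)^{k+2} m_h(\xi)$, so the $L_\infty$ norm of the integrand is at most $C(|\beta|/h)^{k+2}\sup_{I_\beta} m_h$.

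The decisive input is the pointwise smallness of $m_h$ on $I_\beta$, which I would extract from \eqref{mult_nef} by discarding all but one Gaussian in the denominator: keeping the $k=\beta/(2\pi)$ term gives $m_h(\xi)\le \exp\!\big(-\tfrac14[\xi^2-(\xi-\beta/h)^2]\big)$. Here the two estimates diverge. For $|\beta|>2\pi$ (so $|\beta|\ge 4\pi$) the entire window $I_\beta$ lies far enough from the midpoint $\beta/(2h)$ that a short computation gives $m_h(\xi)\le \exp(-c\beta^2/h^2)$ uniformly on $I_\beta$; combining with the $(|\beta|/h)^{k+2}$ factor and the length $h^{-1}$ yields \eqref{first_final_estimate}. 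For $|\beta|=2\pi$ the single-term bound fails near $\xi=\beta/(2h)$, where the two largest Gaussians in the denominator are comparable and $m_h\approx 1/2$; this is exactly where the extra hypothesis \eqref{extra_condition} intervenes, forcing $\phi(h\xi-\beta)$ to vanish on a neighborhood of that point so that $I_\beta$ only reaches down to $\xi\approx(\pi+\epsilon)/h$. Evaluating the single-term bound at that edge gives $m_h(\xi)\le \exp(-c/h^2)$ on the support, with $c$ proportional to $\pi\epsilon$; since now $|\xi|\sim h^{-1}$ and $\tilde k$ is bounded, the integrand is $\lesssim h^{-(k+2)}\exp(-c/h^2)$, and multiplying by the length $h^{-1}$ produces \eqref{second_final_estimate}.

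The main obstacle is the sharp pointwise control of $m_h$ on $I_\beta$, and in particular making the $|\beta|=2\pi$ case work: the exponential gain there is only $\exp(-c/h^2)$ rather than $\exp(-c\beta^2/h^2)$, and it is entirely manufactured by the cutoff condition \eqref{extra_condition} steering the support clear of the region where $m_h$ is close to $1$. Getting the interplay between the polynomial loss coming from $\xi^{k}$ and $m_h^{(i)}$ and this exponential gain right — and confirming that the worst-edge value of $m_h$ still defeats every polynomial factor for $h\le 1$ — is where the genuine care lies; the remaining bookkeeping (the Leibniz expansion and counting of $h$-powers) is routine.
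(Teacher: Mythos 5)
Your proof is correct and follows essentially the same route as the paper's: bound the $L_1$ norm by the support length $O(1/h)$ times the sup norm, expand by the Leibniz rule, reduce to the bound $|m_h''(\xi)|\le C(\tilde{k}/h)^2 m_h(\xi)$ of Lemma~\ref{mult_second_neriv} together with pointwise exponential smallness of $m_h$ on the support window, with the $|\beta|=2\pi$ case rescued exactly as in the paper by condition \eqref{extra_condition}. The only (harmless) deviations are that you control the mixed Leibniz terms via an explicit first-derivative bound $|m_h'(\xi)|\le C(\tilde{k}/h)\,m_h(\xi)$ where the paper simply asserts those terms are dominated by the $m_h''$ term, and that you lower-bound the denominator of $m_h$ by the single Gaussian centered at $\beta/h$ rather than by the one nearest to $\xi$; both variants yield the same exponential gains.
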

%%%%%%%%%%%%%
\begin{proof}
Let $b=\pi + 2\epsilon$.
We prove this by making the estimate
$$
\int_{\reals}
  \left|
    \frac{d^2}{d\xi^2}\left[
      \xi^{k} 
      \phi\bigl(h ( \xi - \frac{\beta}{h})\bigr)
      m_{h}(\xi)
    \right] 
  \right|
\dif \xi 
\le 
\left(\frac{2b}{h}\right) 
\max_{\xi\in \left[\frac{\beta-b}{h},\frac{\beta+b}{h}\right]} 
\left|\frac{d^2}{d\xi^2}\left[\xi^{k} 
\phi\bigl(h ( \xi - \frac{\beta}{h}) \bigr)
m_{h}(\xi)\right] \right|.$$

By applying the product rule to the expression being maximized, we obtain
% $D^{\gamma}\left[\xi^{\alpha} 
% \phi\bigl(h ( \xi - \frac{\beta}{h}) \bigr)
% m_{h}(\xi)\right] $, 
$$
 \frac{d^2}{d\xi^2}
\left[
  \xi^{k} 
  \phi\bigl(h ( \xi - \frac{\beta}{h}) \bigr)
  m_{h}(\xi)
\right] 
=
\sum_{%\substack{\gamma_1,\gamma_2,\gamma_3 \le \gamma\\
|\gamma| =2}
C_{\gamma}
\times
\left(D^{\gamma_1}\xi^{k}\right)
\times 
\left(D^{\gamma_2} \phi\bigl(h ( \xi - \frac{\beta}{h}) \bigr)\right)
\times 
\left(D^{\gamma_3}m_{h}(\xi)\right).
$$
with 
$C_{\gamma} = \frac{2!}{\gamma_1! \gamma_2!\gamma_3!}$.

Without loss, it suffices to consider only components where both derivatives
are on the third factor, i.e., those of the form
$
\xi^{k} \phi\bigl(h ( \xi - \frac{\beta}{h}) \bigr)\frac{d^2}{d\xi^2}\left[m_{h}(\xi)\right],
$
since the other terms 
are small compared to these.
%$$\max_{|\gamma|\le $$
Indeed,
$
|D^{\gamma} \xi^{k}| 
= C |\xi^{k-\gamma}|
\le C |\beta/h|^{k}
$
for $\xi \in [(|\beta|- b)/h, (|\beta|+ b)/h]$
(with $C$  depending only on $k$),
and 
$
\max_{\xi\in \left[\frac{\beta-b}{h},\frac{\beta+b}{h}\right]} 
\left|D^{\gamma} \phi\bigl(h ( \xi - \frac{\beta}{h}) \bigr)\right| 
= 
h^{|\gamma|} 
\max_{\xi\in \left[\frac{\beta-b}{h},\frac{\beta+b}{h}\right]}
\left|\phi\bigl(h ( \xi - \frac{\beta}{h}) \bigr)\right|
\le Ch^{|\gamma|},$
since $\tau := D^{\gamma} \phi$ is a $C^{\infty}$ function with the same support as $\phi$. 

Hence,
$$
\max_{\xi\in \left[\frac{\beta-b}{h},\frac{\beta+b}{h}\right]} 
\left|    \frac{d^2}{d\xi^2}   \left[\xi^{k} 
\phi\bigl(h ( \xi - \frac{\beta}{h}) \bigr)
m_{h}(\xi)\right] \right|
\le 
C
\left|\frac{\beta}{h}\right|^{k} 
\max_{h\xi\in \supp{\phi(\cdot - \beta)}} \left|  m_{h}''(\xi) \right|.
$$
The result now follows directly from Lemma \ref{mult_second_neriv}. Indeed, we have:
$$
\max_{h\xi\in \supp{\phi(\cdot - \beta)}} \left|  m_{h}''(\xi) \right|
\le 
C \left(\frac{|\beta|+b}{h}\right)^2  
\max_{h\xi\in \supp{\phi(\cdot - \beta)}}m_h(\xi)
\le C \left(\frac{|\beta|+b}{h}\right)^2  \max_{h\xi\in \supp{\phi(\cdot - \beta)}}  \frac{e^{-\frac{1}{4}|\xi|^2}}{2e^{-\frac{1}{4} \left|\frac{\pi}{h}\right|^2}}.$$
If (\ref{extra_condition}) holds, then the expression being maximized can be controlled
by 
$\exp(-\frac{1}{4}\frac{\left(|\pi + \epsilon|^2-\pi^2\right))}{h^2}$ and (\ref{second_final_estimate}) follows.

On the other hand, if $|\beta|\ge 4\pi$, then 
$h\xi \in  \supp{\phi(\cdot - \beta)}$ implies that 
$|\xi|^2\le \frac{(|\beta|-b)^2}{h^2}$ and 
$  \exp(-\frac{1}{4}|\xi|^2)  \exp(\frac{1}{4} \left|\frac{\pi}{h}\right|^2) \le 
\exp(-c\frac{|\beta|^2}{h^2})$, from which (\ref{first_final_estimate}) follows.
\end{proof}

\bibliographystyle{siam}
\bibliography{cardinal}

\end{document}